\documentclass{amsart}[12pt,A4]
\usepackage[margin=2.54cm]{geometry}
\usepackage[T1]{fontenc}
\usepackage{amsthm,amsmath,amssymb,color,graphicx,url}
\usepackage{float}
\usepackage{tikz}
\usepackage{graphicx}
\usepackage{mathtools}
\usepackage{makecell}

\usepackage{pstricks-add}
\usepackage[utf8]{inputenc}
\usepackage{accents}

\usepackage{hyperref}
\hypersetup{
  hidelinks,
  colorlinks  = true,    
  urlcolor    = blue,    
  linkcolor   = blue,    
  citecolor   = blue,      
  pdfsubject  = {Maniplexes},%
  pdftitle    = {Faithfulness in Maniplexes},  %
}
\usepackage{cleveref}

\usetikzlibrary{calc}

\newtheorem{theorem}{Theorem}
\newtheorem{lemma}[theorem]{Lemma}
\newtheorem{corollary}[theorem]{Corollary}

\newtheorem{proposition}[theorem]{Proposition}
\theoremstyle{definition}
\newtheorem{definition}[theorem]{Definition}
\newtheorem{remark}[theorem]{Remark}

\newcommand{\N}{{\mathcal N}}

\newcommand{\aut}{\hbox{\rm Aut}}
\newcommand{\Aut}{\hbox{\rm Aut}}
\newcommand{\Fac}{\hbox{\rm Fac}}

\newcommand{\NN}{\mathbb{N}}
\newcommand{\ZZ}{\mathbb{Z}}

\newcommand{\stg}{\hbox{STG}}

\newcommand{\V}{{\rm V}}
\newcommand{\E}{{\rm E}}

\newcommand{\trho}{\tilde{\rho}}

\newcommand{\nset}{\{0,\ldots,n-1\}}

\newcommand{\M}{{\mathcal{M}}}
\newcommand{\F}{{\mathcal{F}}}
\newcommand{\B}{{\mathcal{B}}}
\newcommand{\C}{\mathcal{C}}

\usepackage[shadow, colorinlistoftodos,textsize=tiny, obeyFinal]{todonotes}
\setuptodonotes{fancyline, backgroundcolor=gray!10,bordercolor=gray}

\newcommand{\extm}{2^{(\M,\C)}}
\newcommand{\extmc}{(\extm)^{\omega_\C}}

\makeatletter
\numberwithin{equation}{section}
\numberwithin{figure}{section}
\numberwithin{theorem}{section}

\makeatother

\numberwithin{equation}{section}
\numberwithin{figure}{section}

\title{Highly symmetric unstable maniplexes}
\author{Isabel Hubard}
\address{Institute of Mathematics, National Autonomous University of Mexico (IM UNAM), 04510 Mexico City, Mexico}
\email{isahubard@im.unam.mx}
\author{Micael Toledo}
\address{Faculty of Mathematics and Physics, University of Ljubljana, SI-1000 Ljubljana, Slovenia and Institute of Mathematics, Physics and Mechanics, Jadranska 19, SI-1000 Ljubljana, Slovenia}
\email{micaelalexitoledo@gmail.com}

\begin{document}

\begin{abstract}
A maniplex of rank $n$ is an $n$-valent properly edge-coloured graph that generalises, simultaneously, maps on surfaces and abstract polytopes. The problem of stability in maniplexes is a natural variant of the problem of stability in graphs. A maniplex is stable if every automorphism of its canonical double cover is a lift of some automorphism of the original maniplex. Due to their very rich structure, regular (maximally symmetric) maniplexes are always stable. It is thus natural to ask what is the maximum possible degree of symmetry that a maniplex that is not stable can admit. Symmetry in maniplexes is usually measured by the number of orbits on flags (nodes) of their automorphism group. A few families of unstable maniplexes with $4$ flag-orbits are known for rank $3$. In this paper, we show that $2$-orbit maniplexes exist for every rank $n \geq 3$.
\end{abstract}

\maketitle

NOTE: this is a preliminary version, likely to be updated soon.

\section{Introduction}

Every non-bipartite graph $\Gamma$ admits a connected {\em canonical double cover} $\widetilde{\Gamma}$, isomorphic to the direct graph product $\Gamma \times K_2$. The group of automorphisms of $\widetilde{\Gamma}$ contains a subgroup isomorphic to $\Aut(\Gamma) \times \ZZ_2$, but may be, in general, significantly larger. We say $\Gamma$ is stable precisely when $\Aut(\widetilde{\Gamma}) \cong \Aut(\Gamma) \times \ZZ_2$, and say it is unstable otherwise.
Stability in graphs was first studied in \cite{Marusic1989} and have since received considerable attention (see for instance, \cite{BlasDoubleCovers,ademirdorde,QIN2019154,Surowski,Surowski2001Stability,WILSON2008359}).

The problem of stability extends naturally to other types of combinatorial objects, that can ultimately be regarded as graphs. For instance, in  \cite{gareth} Jones considers stability in maps on surfaces. Typically, a map is defined as the embedding of a connected graph on a compact surface with no boundary, with the property that the embedded graph divides the surface into simply connected regions. 
Every map uniquely determines an edge-coloured graph, called its {\em flag-graph}, that completely encodes its structure, allowing us to define maps in pure graph-theoretical terms. What is more, a map $\widetilde{M}$ is the canonical double cover of a map $\M$, in the topological sense, if and only if the flag graph of $\widetilde{M}$ is the canonical double cover of the flag graph of $\M$, in the graph theoretical sense. That is, the problem of stability in maps connects naturally to stability in graphs. There is however, a caveat. While there is a correspondence between the automorphism of a map and those of its flag graph, this correspondence is, in general, not one-to-one. The automorphisms of a map $\M$ correspond precisely to the edge-colour preserving automorphism of its flag graph, and only to those. As a consequence of this, the group of automorphism of a map, when seen as a flag-graph, acts semiregularly on its nodes, which are usually called {\em flags} in this context. This implies, as Jones remarks in \cite{gareth}, that every regular (maximally symmetric) non-orientable map is stable. Therefore, the problem of stability takes a somewhat different flavour in the setting of maps, as stable maps are very easily produced. A natural and more interesting question is whether weaker symmetry conditions also imply stability, which is partially answered in \cite{gareth}. The degree of symmetry of a map is usually measured by the number of flag-orbits of its automorphism group. Maps whose automorphism group has exactly $k$ distinct flag-orbits are called $k$-orbit maps, with $1$-orbit maps (which we call {\em regular} here) being by far the most studied class. 
Jones proves, among other things, that unstable edge-transitive maps exist. However, with the exception of a small degenerate map, having (one face, one vertex and two edges). all the maps constructed in \cite{gareth} have $4$ or more orbits on flags. The question whether $2$-orbit maps exist is left unanswered.

In this paper, we consider this question in the much broader setting of maniplexes (\cite{wilson2012maniplexes}). A maniplex of rank $n$ is a properly $n$-edge-coloured $n$-valent connected graph that generalises the notion of a flag-graph. 
In particular, a maniplex of rank $3$ is the combinatorial (graph-theoretical) equivalent of a map on a surface, in the sense that the flag-graph of a map is a $3$-maniplex, and every $3$-maniplex arises in this way. In short, maniplexes can be regarded as higher-dimension analogues of maps on surfaces (or as graph-theoretical generalisations of abstract polytopes \cite{mcmullen2002abstract}, \cite{garza2018polytopality}). 
Naturally, many of the notions (such as symmetry or orientability) of the theory of maps extend to maniplexes of higher ranks in an intuitive way. We call the nodes of a maniplex flags and we define a maniplex automorphism as an edge-colour preserving graph automorphism. 
We say a maniplex is regular (often also called reflexible) if its automorphism group is transitive on the flags. 
Just as with maps, every regular non-orientable maniplex is automatically stable and thus the problem of finding highly symmetric (but non-regular) unstable maniplexes is natural. In this paper, we prove that for rank $n\geq3$, there exist infinitely many $2$-orbit $n$-maniplexes that are unstable. Moreover, the maniplexes constructed are fully-transitive, meaning that their group of automorphisms is transitive on the set of faces of rank $i$ for every $i<n$. Precise definitions of the terms appearing in these paragraphs are provided in \Cref{sec:preliminaries} and throughout the paper. We now state the main theorem of this paper.

\begin{theorem}
\label{theo:main}
For every non-orientable regular map $\M$ of type $\{p,q\}$ where at least one of $p$ or $q$ is odd, there exists an unstable fully transitive $2$-orbit map $\M^\omega$, that covers $\M$. Moreover, for all $n>3$ there exist $2^{n-3}$ non-isomorphic unstable $2$-orbit fully transitive $n$-maniplexes, whose $3$-faces are isomorphic to $\M^\omega$.
\end{theorem}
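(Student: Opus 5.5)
The plan is to build $\M^\omega$ as a voltage-graph (or permutation) extension of $\M$, and to exhibit an explicit extra automorphism of its canonical double cover. The notation $\extm$ and $\extmc$ in the preamble suggests the intended construction. I will take $\C$ to be a suitable set of closed walks (cycles) in $\M$ and form the $2$-cover $\extm$, whose flags are pairs (flag of $\M$, element of $\ZZ_2^{\C}$). I will then take the quotient by a twisting automorphism $\omega_\C$ to get $\M^\omega=\extmc$.

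\textbf{Step 1: build $\M^\omega$ as a $\ZZ_2$-voltage cover of $\M$.} Fix a flag $\Phi$ of the regular non-orientable map $\M$ of type $\{p,q\}$, with $p$ odd, say (the case $q$ odd is dual). Assign voltage $1\in\ZZ_2$ to every $0$-edge of the flag graph and $0$ to the other colours, twisted so that the result is not the canonical double cover. Concretely, the voltage of a colour-$i$ edge at a flag $\Psi$ will be $1$ exactly when $i=0$ and $\Psi$ lies in one of the two classes of an "alternating" $2$-colouring of the $0$-edges around each face. This colouring exists consistently because $\Aut(\M)$ acts regularly, so it can be defined via the distinguished generators $\rho_0,\rho_1,\rho_2$ and a homomorphism/coset condition.

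The resulting cover $\M^\omega$ is connected because $\M$ is non-orientable and $p$ is odd: the face boundary walk of odd length carries nonzero voltage. Its automorphism group contains the lifts of the subgroup of $\Aut(\M)$ preserving the voltage assignment up to coboundary. I will check that this lifted subgroup has index $2$ in the full lift (the even-word subgroup generated by $\rho_1,\rho_2,\rho_0\rho_1\rho_0$, say). That gives exactly $2$ flag-orbits, and not $1$, because the voltage assignment is not invariant under $\rho_0$. Full transitivity (vertices, edges, faces) follows since the stabiliser-type subgroups $\langle\rho_1,\rho_2\rangle$, $\langle\rho_0,\rho_2\rangle$, $\langle\rho_0,\rho_1\rangle$ each contain a non-lifting element.

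\textbf{Step 2: instability.} The canonical double cover $\widetilde{\M^\omega}$ is a $\ZZ_2^2$-voltage cover of $\M$, with voltages (old voltage, $1$). I will exhibit a voltage-group automorphism of $\ZZ_2^2$, namely the one swapping the coordinates appropriately. Composed with the lift of $\rho_0$, it preserves the combined voltage assignment up to coboundary, so $\rho_0$ lifts to $\widetilde{\M^\omega}$. The resulting automorphism maps flags of $\widetilde{\M^\omega}$ between the two orbits of $\Aut(\M^\omega)\times\ZZ_2$. Hence $\Aut(\widetilde{\M^\omega})$ is flag-transitive, while $\Aut(\M^\omega)\times\ZZ_2$ has $2$ orbits, so $\M^\omega$ is unstable. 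The parity condition ($p$ or $q$ odd) is exactly what makes the required coboundary exist, since closed walks of odd length must get compatible voltages. I expect this to be the main obstacle: verifying that the twisted assignment is cohomologous to its image under $\rho_0$ after the $\ZZ_2^2$ automorphism. I will check it on a generating set of closed walks, namely the face, vertex and $\{0,2\}$ squares, using the presentation of $\Aut(\M)$.

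\textbf{Step 3: higher rank.} For $n>3$ I will extend inductively, using a known construction producing an $(n+1)$-maniplex with all facets isomorphic to a given $n$-maniplex, such as a trivial extension, $2^{\M}$-type, or Danzer-type construction. At each rank there are two choices: whether the new involution $\rho_{k}$ acts trivially on the $\ZZ_2$ voltage or swaps it. Each choice preserves the $2$-orbit, fully transitive and unstable properties, because the extra lift from Step 2 extends by acting trivially on the new colours. The $n-3$ binary choices give $2^{n-3}$ maniplexes. They are non-isomorphic because the choice pattern is recorded by which colour-$i$ edges join flags in the same versus different orbits, which is an isomorphism invariant.
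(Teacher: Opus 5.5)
Your Step 1 cannot work in the generality of the theorem. Since $\M$ is regular, every automorphism of a colour-preserving cover $\M'$ of $\M$ projects to an automorphism of $\M$: it is determined by the image of one flag, and $\Aut(\M)$ is flag-transitive. For a connected $2$-fold cover, the kernel of this projection is exactly the deck group of order $2$. As $\M'$ has $2|\F_\M|$ flags, it is a $2$-orbit maniplex if and only if the liftable group has index $2$ in $\Aut(\M)$, i.e.\ it is the kernel of a homomorphism $\Aut(\M)\to\ZZ_2$.

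With $p$ odd, the relation $(\rho_0\rho_1)^p=1$ forces $\rho_0$ and $\rho_1$ to have the same image. In particular, your subgroup $\langle\rho_1,\rho_2,\rho_0\rho_1\rho_0\rangle$ is normal of index at most $2$, and modulo it $1=(\rho_0\rho_1)^p\equiv\rho_0^p=\rho_0$. So it is all of $\Aut(\M)$. It is also not the even-word subgroup, which is likewise all of $\Aut(\M)$ because $\M$ is non-orientable.

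If $q$ is odd as well, all three $\rho_i$ must have the same image. The only nontrivial such homomorphism, $\rho_i\mapsto 1$, exists only when $\M$ is orientable. For example, the hemi-icosahedron $\{3,5\}_5$, with $\Aut(\M)\cong A_5$, has no $2$-orbit double cover at all. For $q$ even you would need the specific homomorphism $\rho_0,\rho_1\mapsto 1$, $\rho_2\mapsto 0$, whose existence depends on the other relations of $\Aut(\M)$.

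The voltage is also not well defined as written. The $p$ $0$-edges around a face of odd length $p$ admit no alternating $2$-colouring. You also never check that the $\{0,2\}$-squares receive voltage $0$, which the string property requires.

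Step 2 has a separate problem. Write $\xi$ for your $\ZZ_2$-voltage. The second coordinate of the voltage of a closed walk is the parity of its length, which $\rho_0$ preserves, and closed-walk voltages exhaust $\ZZ_2^2$. So the automorphism you need is the transvection $(a,b)\mapsto(a+b,b)$, not a coordinate swap. The resulting condition is that $\xi\circ\rho_0$ agree with $\xi$ plus the constant voltage $1$ on all closed walks. That condition is the whole content of the instability claim, and you leave it unverified. Step 3 likewise specifies no extension of the voltage.

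The paper avoids the index-$2$ obstruction by using a $4$-fold cross-cover $\M^\vartheta$, with $\vartheta\colon\E(\M)\to\ZZ_4$ equal to $0$ on $1$-edges and $1$ on $0$- and $2$-edges. Because $\vartheta$ is constant on colour classes, every automorphism of $\M$ lifts, and $(u,i)\mapsto(u,i+2)$ is a further automorphism. An odd closed walk of even weight rules out flag-transitivity (Lemma~\ref{lem:noregular}), leaving exactly two orbits; this is where the parity hypothesis on $p,q$ enters. Instability then follows from Wilson's cross-cover result, Theorem~\ref{theo:cross}, rather than from an ad hoc lift.

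Higher ranks come from the colour-coded extensions $\extm$, where $\C$ is a colouring of facets, not a set of closed walks. The paper uses the sign-twisted weight $\omega_\C$ and the total and antipodal colourings.

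Incidentally, $\M^\vartheta$ is a $\ZZ_2$-cover of the regular double cover of $\M$ with voltage $1$ on $0$- and $2$-edges. So the shape of your argument can be realised by taking that regular map, rather than $\M$, as the base.
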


The basis for our construction consists of two operations: the cross-cover operation for graphs, first defined in \cite{WILSON2008359}, and the colour-coded extensions for maniplexes (see \cite{twistop}), which we define here in graph-theoretical terms.

The paper is structured as follows. In \Cref{sec:preliminaries}, we give basic definitions pertaining to graphs and maniplexes. In \Cref{sec:cross} we introduce cross-covers of graph (and maniplexes) and prove a few auxiliary results relating to their connectivity and symmetry.
In \Cref{sec:ext} we discuss colour-coded extensions and define two well-known ways of extending maniplexes that will be useful for our purposes. 
In \Cref{sec:extweight} we show how the notions of cross-covers and colour-coded extensions interweave nicely, and we can use a combination of both operations to produce unstable maniplexes with desirable symmetry properties. 
In particular, we prove the two propositions that will be the key to the proof of \Cref{theo:main}, which we complete in \Cref{sec:proof}.

\section{Maniplexes}
\label{sec:preliminaries}

Maniplexes were first introduced in \cite{wilson2012maniplexes}, as generalisations of (the flag-graphs of)  maps on surfaces and abstract polytopes. 

\begin{definition}
A {\em maniplex} of rank $n$ (or simply an $n$-maniplex) is a connected $n$-valent graph $\M$ with a proper edge-colouring $\varsigma \colon \E(\M) \to \nset$ with the following {\em string property}:
   \begin{enumerate}
      \item[{\bf (S.P.)}] 
       if $i$ and $j$ are two colours such that $|i-j|>1$, then the subgraph of $\M$ induced by all the edges of colours $i$ and $j$ is a union of disjoint $4$-cycles.
   \end{enumerate}
\end{definition}
A vertex of a maniplex $\M$ is usually called a {\em flag} and an edge of colour $i$ is called an {\em $i$-edge}. If two flags are joined by an $i$-edge, we say they are $i$-adjacent or $i$-neighbours. 

For $I \subset \{0,\ldots,n-1\}$, we let $\M_I$ denote the subgraph of $\M$ induced by all the $i$-edges with $i \in I$. We let $\bar{I}$ and $\bar{i}$ denote the complements of the sets $I$ and $\{i\}$ in $\nset$, respectively. Then, $\M_{\bar{i}}$ is the subgraph of $\M$ resulting from the deletion of all the $i$-edges.
An {\em $i$-face} of $\M$, is a connected component of $\M_{\bar{i}}$. The $(n-1)$-faces of $\M$ are called the {\em facets} of $\M$ and each is a bona fide $(n-1)$-maniplex. An $i$-face and a $j$-face are said to be incident if they have non-empty intersection.

An automorphism of $\M$ is a permutation of its flags that maps $i$-adjacent flags to $i$-adjacent flags for all $i \in \{0,\ldots,n-1\}$. In other words, an automorphism of a maniplex is an edge-colour preserving graph automorphism. We let automorphisms act of the right, so $u\varphi$ denotes the image of $u$ under $\varphi$. Observe that since automorphisms preserve edge-colours, we have $u^i\varphi = (u\varphi)^i$ for all $i \in \nset$. It follows that if $\varphi$ fixes a flag $u$, then it fixes all of its neighbours and by the connectivity of $\M$ it must fix all flags. That is, the action of the group of automorphism of $\M$, denoted $\Aut(\M)$, is semiregular on the flags.

The degree of symmetry of a maniplex is usually measured by the number of flag orbits of its full automorphism group. If $\Aut(\M)$ has $k$ distinct orbits on flags, then we say $\M$ is a $k$-orbit maniplex. A $1$-orbit maniplex is maximally symmetric and is usually called regular or reflexible. 
Naturally, the most symmetric non-regular maniplexes are those having $2$ orbits on flags. However, it may me argued that some $2$-orbit maniplexes may be more symmetric than others. To better understand this, let us introduce the notions of a symmetry-type and a symmetry-type graph.

 The symmetry-type graph of $\M$, denoted by $\stg(\M)$, is the quotient of $\M$ by its full automorphism group $G = \Aut(\M)$. That is, $\stg(\M)$ is a multigraph (it may admit parallel edges and semi-edges, but no loops) and is constructed as follows. The vertices of $\stg(\M)$ are the $G$-orbits of flags of $\M$. If $u$ and $v$ are $i$-neighbours in $\M$, then we draw an edge of colour $i$ between the two vertices $u^{G}$ and $v^{G}$ in $\stg(\M)$, if they are distinct, or a semi-edge of colour $i$ at $u^{G} = v^{G}$ if they are not. The resulting multigraph is connected, $n$-valent and properly $n$-edge-coloured, and it has as many vertices as $\M$ has $\Aut(\M)$-orbits.

The symmetry-type graph of a regular maniplex consists of a single vertex with $n$ semi-edges incident to it, one for each colour in $\nset$. If $\M$ is a $2$-orbit $n$-maniplex, such as the ones we will construct in this paper, then $\stg(\M)$ consists of two vertices joined by at least one edge. If $I \subset \nset$ is the (possibly empty) set of colours of the semi-edges of $\stg(\M)$, then we say $\M$ has symmetry-type $2^n_I$, and we may use the symbol $2^n_I$ to denote its symmetry-type graph as well. Maniplexes with symmetry type $2^n_\emptyset$ are called {\emph chiral} maniplexes and have been extensively studied, especially in the context of abstract polytopes and maps.

For $i \in \nset$ we say $\M$ is $i$-face-transitive if $\aut(\M)$ acts transitively on the set of $i$-faces of $\M$. If $\M$ is $i$-face-transitive for all $i \in \nset$ then we say $\M$ is \emph{fully transitive}. Note that information about the $i$-face-transitivity of $\M$ is encoded in its symmetry-type graph. Indeed, the number of orbits of $\Aut(\M)$ on $i$-faces is equal to the number of connected components of the multigraph resulting from deleting all edges and semi-edges of colour $i$ from $\stg(\M)$. Regular maniplexes are fully-transitive, although not all fully-transitive maniplexes are regular. Figure \ref{fig:2orbstgs} shows the seven possible symmetry-type graphs for a $2$-orbit $3$-maniplex. Observe that those on the top row correspond to fully-transitive maniplexes, while those on the bottom correspond to maniplexes that are not $i$-face-transitive for some $i$. Thus, we can think of a $2$-orbit maniplex of the first kind as being 'more symmetric' than those of the second kind.

\begin{figure}[H]
    \centering
    \includegraphics[width=0.7\linewidth]{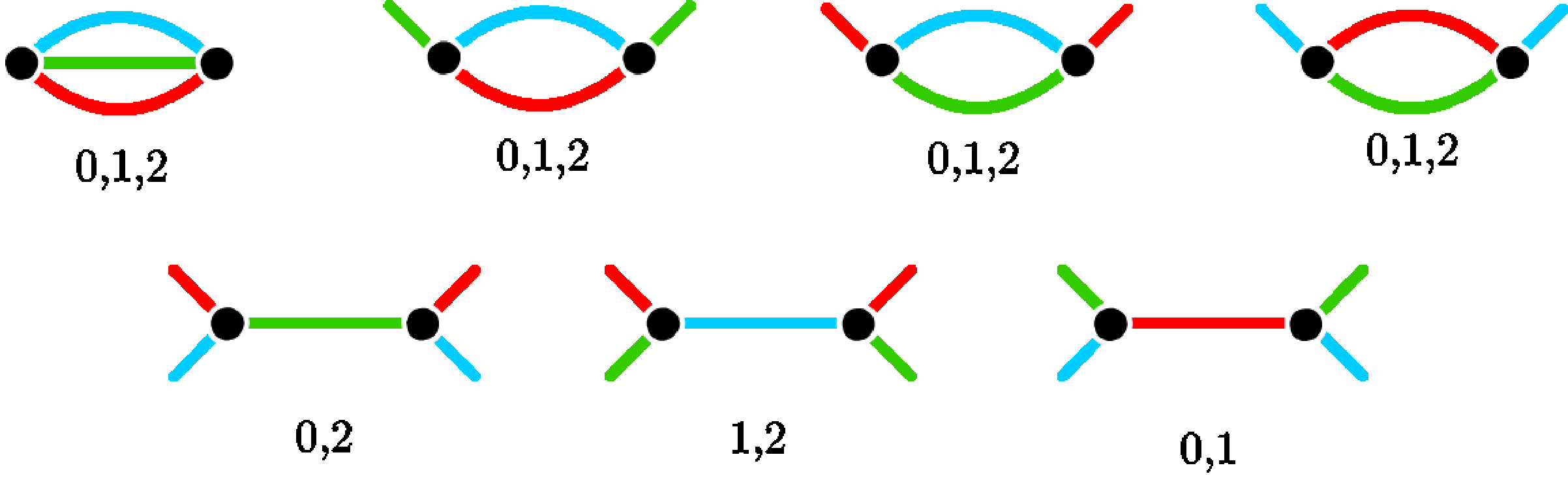}
    \caption{The seven possible symmetry-type graphs for a $2$-orbit $3$-maniplex}
    \label{fig:2orbstgs}
\end{figure}

\subsection{Stability}

A maniplex is {\em orientable} if it is bipartite, and {\em non-orientable} otherwise. This notion of orientability agrees with the notion of orientability of maps and polytopes, as a map (or abstract polytope) is orientable if and only if its flag-graph is bipartite.

The canonical double-cover of a maniplex $\M$, is the graph canonical double cover of $\M$ where every edge is given the same colour as its projection. This results in a properly $n$-edge-coloured graph $\tilde{\M}$ that satisfies the string property, but may be disconnected. In fact, the connectivity of $\widetilde{\M}$ depends entirely on the orientability of $\M$. If $\M$ is orientable, then $\widetilde{\M}$ consist of two disjoint copies of $\M$; if on the other hand $\M$ is non-orientable, then $\widetilde{\M}$ is connected and therefore a maniplex. 

We can define the canonical double cover of a non-orientable maniplex $\M$ with flag set $\F$, as the maniplex $\widetilde{\M}$ with flag-set $\F \times \ZZ_2$ and where $i$-adjacencies are given by
\begin{align*}
    (u,j)^i = (u^i,j+1).
\end{align*}

For every $\varphi \in \Aut(\M)$ and every $a 
 \in \ZZ_2$, the permutation of the flags of $\widetilde{\M}$ given by $(u,j) \mapsto (u\varphi,j+a)$ is an automorphism of $\widetilde{\M}$, called an {\em expected automorphism}. The expected automorphisms of $\widetilde{\M}$ form a subgroup of $\Aut(\widetilde{\M})$ isomorphic to $\Aut(\M) \times \ZZ_2$. The automorphisms of $\widetilde{\M}$ that are not expected are called {\em unexpected}. If all automorphisms of $\Aut(\widetilde{\M})$ are expected (and thus $\Aut(\widetilde{\M}) \cong \Aut(\M) \times \ZZ_2$), then we say $\M$ is {\em stable}; we say $\M$ is {\em unstable} otherwise. 

 Suppose $\M$ is a regular maniplex. That is, $\Aut(\M)$ acts regularly on its flag-set and thus $\Aut(\M)$ has as many elements as $\M$ has flags. Suppose $k$ is this number. Then the double cover $\widetilde{\M}$ has $2k$ flags and since the automorphism group of a maniplex is semiregular, we have $|\Aut(\widetilde{\M})|\leq2k$. However, $\widetilde{\M}$ admits $2|\Aut(\M)|=2k$ expected automorphisms, and thus $2k \leq |\Aut(\widetilde{\M})|$. We conclude $|\Aut(\widetilde{\M})| = 2k$ and thus $\M$ has to be stable.


\subsection{On maps}

If $\M$ is a finite $3$-maniplex, then there is a natural way to construct a compact closed surface with an embedded connected graph, by 'glueing' triangles together (one for each flag of $\M$; see \cite[Sect. 5]{wilson2012maniplexes}). The resulting object is a map $X$ whose flag-graph is isomorphic to $\M$. Therefore, we may think of a finite $3$-maniplex as a map.

The converse of this statement is not true in general. If $X$ is a map with a face defined by a loop, or if the embedded graph has leaves or semi-edges (which some authors allow) then the flag-graph of $X$ has at least one pair or parallel edges, and is therefore not a maniplex. 

However, there exist only two regular (very small) maps whose flag-graph is non-simple: the embedding of a graph consisting of a single vertex with a single loop into the sphere, and the embedding of the complete graph $K_2$ into the sphere. Since none of these maps is orientable, we may safely claim that every non-orientable regular map is a finite $3$-maniplex, and vice versa.

\section{Cross covers}
\label{sec:cross}

Cross-covers of graphs were first defined in \cite{WILSON2008359} and are a straightforward way to produce unstable graphs. For $k > 2$ the $k$-fold cross-cover of a graph is unstable provided it is connected and non-bipartite. When working with maniplexes, a little extra care needs to be taken, as the cross-cover of a maniplex is not always a maniplex. We will first give precise definitions of some basic notions from graph theory that will be necessary to define and discuss the cross-cover construction, in graph-theoretical term. Later on, we will see what this means in terms of maniplexes.

Let $\Gamma$ be a graph. A walk of length $k$ in $\Gamma$ is a sequence of vertices $W := (u_0,u_1,\ldots,u_k)$ such that $u_iu_{i+1}$ is an edge for all $i \in \{0,\ldots,k-1\}$. Each edge of the form $u_iu_{i+k}$ is said to be {\em traced} by $W$. A walk is closed if the first and last vertex of the sequence is the same. A cycle is a closed walk in which $u_i \neq u_j$ for any two distinct $i,j \in \{0,\ldots,k-1\}$. A path is a walk with no repeated vertices.
If the first and last vertices of a walk $W$ are $u$ and $v$, respectively, then we may say that $W$ is a $uv$-walk. Note that walks are inherently directed: they start at precise vertex, and they end (when finite) at a precise vertex. 

If $W$ and $W'$ are two walks such that the final vertex of $W$ is the initial vertex of $W'$, then we let $WW'$ denote the concatenation of $W$ with $W'$. Note that if $W$ is closed, then it can be concatenated to itself, and we let $W^k$ be the walk resulting from concatenating $W$ with itself $k$ times. The reverse of $W$, denoted $W^{-1}$, is the walk visiting the same vertices and tracing the same edges as $W$, but in reverse order.

Walks can be identified with (and are often simply defined as) the subgraph induced by the set of edges they trace. Most of the time, there is no harm in doing this, even though the sequence and the subgraph are formally distinct notions. We will adhere to this convention when convenient and when there is no possibility of ambiguity.

A graph morphism between two graphs $\Gamma$ and $\Delta$ is an adjacency preserving mapping $V(\Gamma) \to \V(\Delta)$. A morphism is an isomorphism or an epimorphism if it is bijective or surjective, respectively. We say $\Gamma$ is a cover of $\Delta$ if there is an epimorphism (which is often called a projection mapping) from $\Gamma$ to $\Delta$. An automorphism is a isomorphism of a graph onto itself. 


\begin{definition}
Let $\Gamma$ be a graph, let $k \in \NN$ and let $\omega \colon \E(\Gamma) \to \ZZ_k$. The $k$-cross-cover of $\Gamma$ relative to $\omega$, denoted $\Gamma^\omega$, is the graph such that:

\begin{enumerate}
    \item the vertex-set of $\Gamma^\omega$ is $\V(\Gamma) \times \ZZ_n$ 
    \item for every edge $e:=uv$ in $\Gamma$ and every $i \in \ZZ_n$, there is an edge in $\Gamma^\omega$ with endpoints $(u,i)$ and $(v,\omega(e)-i)$.
\end{enumerate}
\end{definition}

The function $\omega$ is called a {\em $k$-weight function} for $\Gamma$ (sometimes we way omit the prefix $k$- when not relevant). Note that the natural projection $\pi: \Gamma^\omega \to \Gamma$ mapping every vertex $(u,i)$ to $u$ is a graph epimorphism. We call the set $\pi^{-1}(u)$ the fibre of $u$. The fibre of an edge is defined similarly. If $W= (u_0,\ldots,u_m)$ is a walk of $\Gamma^\omega$, then its projection $\pi(W)$ is the sequence $(\pi(u_0),\ldots,\pi(u_m))$, which is a walk of $\Gamma$.

Even though the weight function $\omega$ is defined on the edges of $\Gamma$, it extends naturally to the set of walks of $\Gamma$. Suppose $W = (u_0,\ldots,u_{m})$ is a walk of length $k$ in $\Gamma$, and let $e_i$ denote the edge $u_iu_{i+1}$. We define the weight of $W$ as 
\begin{align}
    \omega(W) = \sum_{i=0}^{m-1} (-1)^i\omega(e_i).
\end{align}
Observe that, since walks are sequences, the weight of $W$ may differ from that of its reverse $W^{-1}$. In fact $\omega(W) = -\omega(W^{-1})$ when $W$ has even length, but $\omega(W) = \omega(W^{-1})$ when $W$ has odd length. 

We define, for $i \in \ZZ_n$, the {\em lift} of $W$ based at $(u,i)$ as the sequence 
\begin{align}
\bar{W} = ((u_0,i), (u_1,\omega(e_0)-i), (u_2,\omega(e_1) - \omega(e_0) + i), \ldots, (u_m, j)),
\end{align}
where $j = (-1)^{m}(i-\omega(W))$. Note that $W$ has one lift for each vertex in the fibre of $u_0$, and that each one is a walk of length $m$ in $\Gamma^\omega$ that projects onto $W$. Understanding how the lifts of closed walks behave will be important to prove several results regarding symmetry and connectivity. Unlike the case of regular covers (as defined in \cite{GrossVoltageGraphs}), the different lifts of a walk in a cross-cover need not be isomorphic.

\subsection{Connectivity}




\begin{lemma} 
\label{lem:evenlift}
    Let $\Gamma$ be a graph and let $\omega\colon \Gamma \to \ZZ_k$. Let $W = (u_0,\ldots,u_m)$ be a closed walk of even length $m$ in $\Gamma$ and let $a$ be the order of $\omega(W)$ in $\ZZ_k$. Then, for $i \in \ZZ_k$ and $b \in \ZZ$, the lift of $W^b$ based at $(u_0,i)$ is a closed walk of $\Gamma^\omega$ if and only if $a \mid b$.
\end{lemma}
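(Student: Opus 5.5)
The plan is to compute the endpoint of the lift of $W^b$ explicitly from the formula for lifts and then read off when it equals the starting vertex. Since $W$ is closed, $W^b$ is a well-defined closed walk of length $bm$ in $\Gamma$. Its lift based at $(u_0,i)$ ends at $(u_0,j)$, where, by the formula for lifts, $j=(-1)^{bm}(i-\omega(W^b))$. Because $m$ is even, $bm$ is even and so $j=i-\omega(W^b)$. The lift is therefore closed exactly when $\omega(W^b)=0$ in $\ZZ_k$.

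The main step is to show that $\omega(W^b)=b\,\omega(W)$ for $b\geq 0$. Write $W^b$ as the concatenation of $b$ copies of $W$. In the alternating sum defining $\omega(W^b)$, the $t$-th copy of $W$ (for $t=0,\ldots,b-1$) occupies the edge positions $tm,\ldots,tm+m-1$. Its contribution is therefore $(-1)^{tm}\omega(W)=\omega(W)$, since $m$ is even.

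More generally, I would record the concatenation rule $\omega(WW')=\omega(W)+(-1)^{\ell}\omega(W')$, where $\ell$ is the length of $W$. This rule also handles negative $b$, interpreting $W^{b}$ as $(W^{-1})^{|b|}$. For a closed walk of even length we have $\omega(W^{-1})=-\omega(W)$, as noted before the lemma, so $\omega(W^b)=b\,\omega(W)$ holds for all $b\in\ZZ$. For $b=0$ the walk is trivial and the claim holds since $a\mid 0$.

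Finally, $b\,\omega(W)=0$ in $\ZZ_k$ if and only if the order $a$ of $\omega(W)$ divides $b$, which is the standard characterisation of element order in a cyclic group. This finishes the argument, and it is independent of $i$, matching the statement.

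I expect the only real obstacle to be the bookkeeping of signs in the alternating sum. The point is that even length makes each copy contribute with sign $+$ and also makes the endpoint formula sign-free. I would also double-check the lift endpoint formula against the definition of the lift: the second coordinate after $s$ steps is $\sum_{r<s}(-1)^{s-1-r}\omega(e_r)+(-1)^s i$. At $s=bm$, which is even, this equals $i-\omega(W^b)$, confirming $j$ directly from the definition of $\Gamma^\omega$.
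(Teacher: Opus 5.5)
Your proposal is correct and follows essentially the same route as the paper. Both arguments use the evenness of $m$ to show that the lift of $W^b$ based at $(u_0,i)$ ends at $(u_0,\,i-b\,\omega(W))$, and then conclude because $b\,\omega(W)=0$ in $\ZZ_k$ if and only if $a\mid b$; the paper tracks the second coordinate at each return to the fibre of $u_0$, whereas you compute $\omega(W^b)=b\,\omega(W)$ directly, and your handling of negative $b$ via $\omega(W^{-1})=-\omega(W)$ covers a case the paper leaves implicit.
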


\begin{proof}
Let $\ell_i(W^b)$ be the lift of $W^b$ based at $(u_0,i)$. Clearly, for all $j < m$, the $j$-th vertex visited by $\ell_i(W^b)$ is in the fiber of the $j$-th vertex visited by $W$. This means that $ell_i(W^b)$ visits the fiber of $u_0$ precisely at every $m$-th vertex. 

Now, the $m$-th vertex visited by $\ell_i(W^b)$ is $(u_0,j)$ where $j = (-1)^m(i-\omega(W)) = i-\omega(W)$, since $m$ is even. In general for an integer $r \leq b$, the $rm$-th vertex visited by $\ell_i(W^b)$ has second coordinate $i - r\omega(W)$. Therefore, $\ell_i(W^b)$ is closed, if and only $b\omega(W) \equiv 0 \pmod{k}$. The latter happens if and only if $a \mid b$. 
\end{proof}

\begin{remark}
\label{rem:evenlift}
In the proof of \Cref{lem:evenlift}, if $W$ is a cycle, then $\ell_i(W^b)$ is a cycle if and only if $a = b$. In this case, the number of vertices in the fiber of $u_0$ visited by $\ell_i(W^b)$ is equal to the order of $b$ in $\ZZ_n$.
\end{remark}

\begin{lemma}
\label{lem:oddlift}
Let $\Gamma$ be a graph and let $\omega\colon \Gamma \to \ZZ_n$. Let $W = (u_0,\ldots,u_k)$ be a closed walk of odd length $k$ in $\Gamma$. Let $i \in \nset$ and define
\[
a =
\begin{cases}
 1 & \text{ if $\omega(W) \equiv 2i \pmod{n}$},\\
 2 & \text{ otherwise}.
\end{cases}
\]
Then, for $b \in \ZZ$, the lift of $W^b$ based at $(u_0,i)$ is closed if and only if $a \mid b$.
\end{lemma}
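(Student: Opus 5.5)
We argue as in the proof of \Cref{lem:evenlift}: track the second coordinate of the lift of $W^b$ each time it returns to the fibre of $u_0$. Let $\ell_i(W^b)$ denote the lift of $W^b$ based at $(u_0,i)$. Its $rk$-th vertex, for $0\le r\le b$, lies in the fibre of $u_0$, so we compute its second coordinate $c_r$.

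By the formula for the endpoint of a lift, one traversal of $W$ starting at $(u_0,c)$ ends at $(u_0,(-1)^k(c-\omega(W)))$. Since $k$ is odd, this is $(u_0,\omega(W)-c)$. Hence $c_0=i$ and $c_{r+1}=\omega(W)-c_r$. The map $c\mapsto \omega(W)-c$ is an involution of $\ZZ_n$, so we get $c_r=i$ for $r$ even and $c_r=\omega(W)-i$ for $r$ odd. One point needs care: the lift of $W^b$ is the concatenation of successive lifts of $W$, each based where the previous one ended. This holds because the alternating-sign weight of the concatenated walk continues consistently. Indeed, the sign used at the start of the $(r+1)$-st copy is $(-1)^{rk}$, and the recursion $c_{r+1}=\omega(W)-c_r$ (rather than $c_{r+1}=c_r-\omega(W)$ or some other variant) has to be checked directly from the definition of the lift, step by step.

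Now $\ell_i(W^b)$ is closed exactly when $c_b=i$. If $b$ is even this always holds. If $b$ is odd, it holds precisely when $\omega(W)-i\equiv i$, that is, when $\omega(W)\equiv 2i \pmod n$. So when $\omega(W)\equiv 2i$ every $b$ works, matching $a=1$. Otherwise exactly the even $b$ work, matching $a=2$. (The case $b\le 0$ is handled the same way, using $W^{-1}$, or is trivial for $b=0$.)

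The main obstacle is keeping the sign bookkeeping correct in the concatenation step. Everything after that is immediate.
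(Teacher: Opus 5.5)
Your proposal is correct and follows essentially the same route as the paper. Both arguments use two facts: one traversal of the odd-length walk $W$ sends the second coordinate $c$ to $\omega(W)-c$, and the lift of $W^b$ is the concatenation of successive lifts of $W$. The paper splits into the cases $\omega(W)\equiv 2i$ and $\omega(W)\not\equiv 2i$, whereas you use the involution $c\mapsto\omega(W)-c$ uniformly.

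The concatenation point you flag is immediate, because the lift is built step by step by the local rule $c\mapsto \omega(e)-c$; the paper uses it without comment. Your handling of $b\le 0$ via $W^{-1}$, which has odd length and the same weight, covers a case the paper leaves implicit.
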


\begin{proof}
For $i \in \ZZ_n$ and $r \in \ZZ$, let $\ell_i(W^r)$ denote the lift of $W^r$ based at $(u,i)$. Note that this is well defined since $W$ is closed. We have two cases.

First, suppose $\omega(W) \equiv 2i \pmod{n}$ so that $a = 1$. We will show that $\ell_i(W^b)$ is closed for all $b \in \ZZ$. Observe that $\ell_i(W)$ has final vertex $(u_0,\omega(W)-i)$, but by assumption $\omega(W)-i \equiv 2i - i = i$, and therefore $\ell(W)$ is a closed walk. Note that this implies that for every $b \in \ZZ$ we have $\ell_i(W^b) = (\ell_i(W))^b$, and $(\ell_i(W))^b$ is obviously closed, since it is the iterated concatenation of a closed walk with itself.

Now, suppose $\omega(W) \not \equiv 2i \pmod{n}$ so that $a=2$. First, note that $\ell_i(W)$ is a walk with final vertex $(u_0,\omega(W)-i)$. By assumption $\omega(W) -i \not \equiv i$, and thus $\ell_i(W)$ is not closed. However, $\ell_i(W^2)$ is simply the concatenation of $\ell_i(W)$ with $\ell_j(W)$ where $j =\omega(W)-i$. Moreover, the final vertex of $\ell_j(W)$ is $(u,r)$ where $r = \omega(W) - (\omega(W)-i) = i$. Hence, the final vertex of $\ell_i(W^2)$ is $(u_0,i)$ and thus $\ell_i(W^2)$ is a closed walk. This implies that if $b = 2s$ for some $s \in \ZZ$, then $\ell_i(W^b) = \ell_i(W^{2s}) = \ell_i(W^2)^s$ and thus $\ell_i(W^b)$ is closed.
For the converse, we will prove that if $b$ is odd, then $\ell_i(W^b)$ is not closed. Let $b = 2s + 1$ for some $s \in \ZZ$ and note that $\ell(W^{2k+1}) = \ell(W^{2k})\ell(W)$. Moreover, $\ell(W^{2k})$ is closed but $\ell(W)$ is not, and thus $\ell(W^{2k+1})$ cannot be closed. This concludes the proof.
\end{proof}

\Cref{lem:evenlift} tells us that if $W$ is a closed walk of even length $k$, then $\pi^{-1}(W)$ is a disjoint union of closed walks of length $dk$, for some divisor $d$ of $n$. Meanwhile, if $W$ has odd length $k$, then the connected components of $\pi^{-1}(W)$ are not necessarily all isomorphic. By \Cref{lem:oddlift}, some are closed walks of length $k$ and some others are closed walks of length $2k$, and these are the only possibilities for a connected component of $\pi^{-1}(W)$.
Lemma \ref{lem:evenlift} has the following important consequence.

\begin{lemma}
\label{lem:connected}
Let $\Gamma$ be a connected graph and let $\omega\colon \Gamma \to \ZZ_k$. Let $C$ be a cycle of even length $m$ in $\Gamma$. If $\gcd(\omega(C),k) = 1$, then $\Gamma^\omega$ is a connected graph.
\end{lemma}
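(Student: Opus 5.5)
The plan is to show that every vertex of $\Gamma^\omega$ lies in the same connected component as some fixed vertex $(u_0,0)$, where $u_0$ is a vertex of the cycle $C=(u_0,u_1,\ldots,u_m=u_0)$. The argument has two steps: first connect the whole fibre of $u_0$ using lifts of powers of $C$, and then reach every other fibre by lifting paths of $\Gamma$.

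\textbf{Step 1: the fibre of $u_0$.} Let $a$ denote the order of $\omega(C)$ in $\ZZ_k$. Since $\gcd(\omega(C),k)=1$, we have $a=k$. Take the lift of $C^b$ based at $(u_0,0)$, for any $b\geq 0$. By the computation in the proof of \Cref{lem:evenlift}, this lift is a walk in $\Gamma^\omega$ whose $rm$-th vertex is $(u_0,-r\omega(C))$ for $0\le r\le b$; this uses that $m$ is even, so no sign changes accumulate. Consecutive vertices of a lift are adjacent in $\Gamma^\omega$, so all the vertices $(u_0,-r\omega(C))$ lie in one component. Because $\omega(C)$ generates $\ZZ_k$, the elements $-r\omega(C)$ for $r=0,\ldots,k-1$ run over all of $\ZZ_k$. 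Hence the entire fibre $\pi^{-1}(u_0)$ lies in a single connected component $K$ of $\Gamma^\omega$.

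\textbf{Step 2: the other fibres.} Let $(v,j)$ be an arbitrary vertex of $\Gamma^\omega$. Since $\Gamma$ is connected, there is a walk $P$ from $v$ to $u_0$ in $\Gamma$. The lift of $P$ based at $(v,j)$ is a walk in $\Gamma^\omega$ starting at $(v,j)$ and ending at some vertex of $\pi^{-1}(u_0)$. This endpoint lies in $K$ by Step 1, so $(v,j)\in K$ as well. Therefore $\Gamma^\omega$ is connected.

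The only point needing care is the claim that lifts are genuine walks in $\Gamma^\omega$. This follows directly from the definition: the step from $(u_t,x)$ along the edge $e=u_tu_{t+1}$ goes to $(u_{t+1},\omega(e)-x)$, which is an edge of $\Gamma^\omega$. The remaining thing to check is the sign bookkeeping for even $m$ in Step 1, which the proof of \Cref{lem:evenlift} already provides. I do not expect a real obstacle here. The key input is that coprimality makes $\omega(C)$ a generator of $\ZZ_k$, and that is exactly the role of the hypothesis. Note that the cycle hypothesis is not essential: any closed walk of even length would do.
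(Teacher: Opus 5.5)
Your proof is correct and follows the same route as the paper. The lift of $C^k$ at $(u_0,0)$ connects the whole fibre of $u_0$ because $\omega(C)$ generates $\ZZ_k$, and lifting a path from an arbitrary vertex to $u_0$ then reaches that fibre. Your version needs only that the lift is a walk, not the cycle property from Remark~\ref{rem:evenlift}, which is why your remark that any closed walk of even length would do is also valid.
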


\begin{proof}
Let $C=(u_0,\ldots,u_m)$. By \Cref{lem:evenlift} and \Cref{rem:evenlift}, the lift of $C^k$ based at $(u_0,0)$ is a cycle, and it visits all the vertices in the fibre of $u_0$. Therefore, it suffices that we show that if $(v,i)$ is a vertex of $\Gamma^\omega$ there is a walk connecting it to some vertex in the fibre of $(u_0,0)$. Let $(v,i)$ is a vertex of $\Gamma^\omega$. Since $\Gamma$ is connected, there is a $vu$-path $W$ and the lift of $W$ based at $(v,i)$ ends in a vertex in the fiber of $u_0$.
\end{proof}

\subsection{Automorphisms}

\begin{lemma}
\label{lem:isolift}
Let $\M$ and $\M'$ be graphs with weight functions $\omega: \E(\M) \to \ZZ_k$  and $\omega': \E(\M') \to \ZZ_k$. If there exists a graph isomorphism $\varphi \colon \M \to \M'$ and a group isomorphism $\alpha \in \Aut(\ZZ_k)$ such that 
\begin{align*}
\omega'(e\varphi) = \alpha(\omega(e)).
\end{align*} 
Then, the function $\bar{\varphi}$ given by  $(u,i) \mapsto (\varphi(u),\alpha(i))$ is an isomorphism between $\M^\omega$ and $(\M')^{\omega'}$.
\end{lemma}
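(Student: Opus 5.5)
The plan is to check the two things that make $\bar{\varphi}$ an isomorphism: it is a bijection on vertex sets, and it sends edges of $\M^\omega$ exactly onto edges of $(\M')^{\omega'}$.

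\emph{Bijectivity.} The vertex sets are $\V(\M)\times\ZZ_k$ and $\V(\M')\times\ZZ_k$. The map $\bar{\varphi}$ is the product of the bijection $\varphi$ on the first coordinate with the bijection $\alpha$ on the second, so it is a bijection, and its inverse is $(u',i)\mapsto(\varphi^{-1}(u'),\alpha^{-1}(i))$.

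\emph{Edges go to edges.} Let $e=uv$ be an edge of $\M$ and $i\in\ZZ_k$. By definition of the cross-cover, the corresponding edge of $\M^\omega$ joins $(u,i)$ and $(v,\omega(e)-i)$. Its image under $\bar{\varphi}$ joins $(\varphi(u),\alpha(i))$ and $(\varphi(v),\alpha(\omega(e)-i))$. Since $\alpha$ is a group automorphism of $\ZZ_k$ and $\omega'(e\varphi)=\alpha(\omega(e))$, we get
\[
\alpha(\omega(e)-i)=\alpha(\omega(e))-\alpha(i)=\omega'(e\varphi)-\alpha(i).
\]
The edge $e\varphi=\varphi(u)\varphi(v)$ lies in $\M'$ because $\varphi$ is a graph isomorphism. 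So the image is exactly the edge of $(\M')^{\omega'}$ lying over $e\varphi$ and based at $(\varphi(u),\alpha(i))$.

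There is one subtlety. An edge $uv$ can equally be written from the endpoint $v$, giving endpoints $(v,j)$ and $(u,\omega(e)-j)$. This is the same edge set, because the relation $j=\omega(e)-i$ is symmetric in $i$ and $j$, so the computation above does not depend on orientation. If multigraphs (parallel edges) are allowed, the same argument applies edge by edge. In that case $\bar{\varphi}$ induces a bijection on fibres of edges, namely $(e,i)\mapsto(e\varphi,\alpha(i))$.

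\emph{Non-edges go to non-edges.} For this it is enough to run the same argument for $\varphi^{-1}$ and $\alpha^{-1}$. They satisfy $\omega(e'\varphi^{-1})=\alpha^{-1}(\omega'(e'))$, which follows by applying $\alpha^{-1}$ to the hypothesis. Hence $\bar{\varphi}^{-1}$ also preserves adjacency, and $\bar{\varphi}$ is an isomorphism.

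The only point that needs care is this well-definedness with respect to edge orientation and the use of additivity of $\alpha$. Beyond that the argument is a direct verification.
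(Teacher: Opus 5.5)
Your proof is correct and follows the paper's argument: the same computation $\alpha(\omega(e)-i)=\alpha(\omega(e))-\alpha(i)=\omega'(e\varphi)-\alpha(i)$ shows that $\bar{\varphi}$ sends the edge over $e$ based at $(u,i)$ to the edge over $e\varphi$ based at $(\varphi(u),\alpha(i))$. The only difference is the final step. The paper concludes from bijectivity together with the two cross-covers having the same number of edges, which tacitly assumes finiteness. You instead run the same argument for $\bar{\varphi}^{-1}$ using $\omega(e'\varphi^{-1})=\alpha^{-1}(\omega'(e'))$, which also covers infinite graphs.
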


\begin{proof}
Let $\bar{e}$ be an edge of $\M^\omega$. Then $\bar{e}$ must be in the fiber of some edge $e$ of $\M$ with endpoints, say, $u$  and $v$. That is, the endpoints of $\bar{e}$ are $(u,i)$ and $(v,\omega(e)-i)$ for some $i \in \ZZ_n$. 

Now, since $(u\varphi,v\varphi)$ is an edge of $\M$ with weight $\omega(e\varphi)$, we see that $(u\varphi,\alpha(i))$ (which equals $(u,i)\bar{\varphi}$) is adjacent to $(v\varphi,\omega(e\varphi)-\alpha(i))$. However,
\begin{align*}
(v\varphi,\omega(e\varphi)-\alpha(i)) &= (v\varphi,\alpha(\omega(e))-\alpha(i))\\
 &= (v\varphi,\alpha(\omega(e)-i))\\
 &= (v,\omega(e)-i)\bar{\varphi}
\end{align*}
where the first equality follows from the fact that $\omega(e\varphi) = \alpha(\omega(e))$, and the second one from the fact $\alpha$ is a group automorphism. This shows that $\bar{\varphi}$ maps edges to edges. Furhermore, since it is clearly bijective and the graphs $\M^\omega$ and $\M'^{\omega'}$ have the same number of edges, $\bar{\varphi}$ is a graph isomorphism.
\end{proof}

We say an automorphism $\varphi \in \Aut(\Gamma)$ {\em lifts} if there is an automorphism $\bar{\varphi} \in \Aut(\Gamma^\omega)$ such that $\pi(u\bar{\varphi}) = \pi(u)\varphi$. In that case, we call $\bar{\varphi}$ a {\em lift} of $\varphi$ and we say $\omega$ is $\varphi$-consistent. If all automorphisms of $\Gamma$ lift, then we say $\omega$ is $\Aut(\Gamma)$-consistent. Lemma \ref{lem:isolift} gives us a simple criterion for automorphism of a graph to lift. 

\begin{corollary}
\label{cor:autolift}
Let $\M$ be a graph with a weight function $\omega: \E(\M) \to \ZZ_k$. If there exists  $\varphi \in \Aut(\M)$ and $\alpha \in \Aut(\ZZ_k)$ such that $\omega(e\varphi) = \alpha(\omega(e))$ for all edges $e$ of $\M$, then $\varphi$ has a lift.
\end{corollary}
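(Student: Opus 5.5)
This is a direct specialisation of \Cref{lem:isolift}. We take $\M' = \M$, $\omega' = \omega$, and the graph isomorphism to be the automorphism $\varphi$ itself. The hypothesis $\omega(e\varphi) = \alpha(\omega(e))$ for every edge $e$ is then exactly the compatibility condition required in \Cref{lem:isolift}.

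That lemma gives that the map $\bar{\varphi}\colon (u,i) \mapsto (u\varphi, \alpha(i))$ is a graph isomorphism from $\M^\omega$ onto $\M^{\omega}$. Since domain and codomain coincide, $\bar{\varphi}$ is an automorphism of $\M^\omega$.

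It remains to check that $\bar{\varphi}$ is a lift in the sense of the definition, i.e.\ that $\pi(x\bar{\varphi}) = \pi(x)\varphi$ for every vertex $x$ of $\M^\omega$. Writing $x = (u,i)$, we have
\[
\pi(x\bar{\varphi}) = \pi(u\varphi,\alpha(i)) = u\varphi = \pi(x)\varphi .
\]
Hence $\bar{\varphi}$ is a lift of $\varphi$, and $\omega$ is $\varphi$-consistent.

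There is no real obstacle here; all the work is in \Cref{lem:isolift}. The one point to keep straight is that the lemma's bijectivity and edge-count argument applies verbatim when $\M' = \M$, so the resulting isomorphism is indeed onto $\M^\omega$ itself. (If $\M$ is a maniplex and colours are to be respected, we also note that $\bar{\varphi}$ maps the fibre of an $i$-edge to the fibre of an $i$-edge, because $\varphi$ preserves colours. This is not needed for the graph-theoretic statement.)
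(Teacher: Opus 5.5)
Your proposal is correct and follows the paper's proof: both apply \Cref{lem:isolift} with $\M'=\M$ and $\omega'=\omega$ to get the automorphism $(u,i)\mapsto(u\varphi,\alpha(i))$ of $\M^\omega$. Both then check that this map commutes with the projection $\pi$.
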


\begin{proof}
By \Cref{lem:isolift} the mapping $(u,i) \mapsto (u\varphi,\alpha(i))$ is an automorphism of $\M^\omega$ and it clearly commutes with the projection $\pi$.
\end{proof}

\subsection{Cross-covers of maniplexes}

Let $\M$ be an $n$-maniplex and let $\omega$ be a weight function for $\M$. Just as with the canonical double covers of maniplexes, the colouring of $\M$ extends naturally to its cross-cover $\M^\omega$ by simply colouring each edge $e$ of $\M^\omega$ with the colour of $\pi(e)$, making $\M^\omega$ a properly $n$-edge-coloured graph, but not necessarily a maniplex. Indeed, may be disconnected or may lack the string property. Nevertheless, we will henceforth assume, whenever we are dealing with the cross-cover of a maniplex, that its edges are properly coloured in this way. After all, Lemma \ref{lem:connected} gives us a criterion for connectivity, and we can derive sufficient and necessary conditions for the cross-cover of a maniplex to have the string property from Lemma \ref{lem:evenlift}, as is shown in \Cref{lem:cuadrados}.

\begin{lemma}
\label{lem:cuadrados} 
Let $\M$ be an $n$-maniplex with a weight function $\omega$. Then $\M^\omega$ has the string property if and only if $\omega(C)=0$ for every $4$-cycle $C$ in $\M$ of alternating non-consecutive colours.
\end{lemma}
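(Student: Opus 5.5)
The plan is to reduce the string property of $\M^\omega$ to a statement about the preimages $\pi^{-1}(C)$ of the alternating $4$-cycles $C$ of $\M$, and then to apply \Cref{lem:evenlift} and \Cref{rem:evenlift} with $m=4$.

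First I set up the reduction. Fix colours $i,j$ with $|i-j|>1$. Since edges of $\M^\omega$ inherit colours through $\pi$, the subgraph $(\M^\omega)_{\{i,j\}}$ is exactly $\pi^{-1}(\M_{\{i,j\}})$. By the string property of $\M$, the graph $\M_{\{i,j\}}$ is a disjoint union of $4$-cycles $C$, each alternating in colours $i$ and $j$. Hence $(\M^\omega)_{\{i,j\}}$ is the disjoint union of the graphs $\pi^{-1}(C)$. Each $\pi^{-1}(C)$ is $2$-regular and properly $2$-edge-coloured, so it is a disjoint union of even cycles. Moreover, every component of $\pi^{-1}(C)$ is the lift of $C^b$ based at some $(u_0,r)$, where $b$ is the least positive integer making that lift closed. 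So $\M^\omega$ has the string property if and only if, for every such $C$ and every $r\in\ZZ_k$, the least such $b$ equals $1$. Equivalently, every lift of $C$ itself must already be closed, which forces the component to be a $4$-cycle.

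Next I apply \Cref{lem:evenlift} with $m=4$, which is even. Let $a$ be the order of $\omega(C)$. The lift of $C^b$ based at $(u_0,r)$ is closed if and only if $a\mid b$, independently of $r$. So the least $b$ is $a$, and the component is a cycle of length $4a$; this is the content of \Cref{rem:evenlift}. Its length is $4$ if and only if $a=1$, that is, if and only if $\omega(C)=0$.

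One check is needed before this step: the $4$-cycle $C$ is genuinely a cycle in $\M$, and its lift of $C^a$ has no repeated vertices. Repeated vertices could only occur among vertices in the same fibre, and these are distinct because $r-s\,\omega(C)$ takes $a$ distinct values for $0\le s<a$.

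The main subtlety is that $\omega(C)$ depends on the starting vertex and orientation of $C$; it is defined up to sign, and up to sign the alternating sums agree. Since $\omega(C)=0$ is invariant under negation, the condition is well defined. I would note this explicitly.

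Combining both directions gives the statement. If $\omega(C)=0$ for all alternating $4$-cycles $C$ with non-consecutive colours, then every $(\M^\omega)_{\{i,j\}}$ is a union of disjoint $4$-cycles. Conversely, if $\omega(C)\neq 0$ for some such $C$, then $\pi^{-1}(C)$ contains a cycle of length $4a>4$, and the string property fails.
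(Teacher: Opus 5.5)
Your proof is correct, and it is the derivation the paper intends: the paper gives no written proof of this lemma, only the remark that it follows from \Cref{lem:evenlift}, and you apply that lemma with $m=4$ to each component $C$ of $\M_{\{i,j\}}$, so that every component of $\pi^{-1}(C)$ is a cycle of length $4$ times the order of $\omega(C)$. Your extra observations, that $\omega(C)$ is only defined up to sign (which is harmless here) and that the lift of $C^a$ has no repeated vertices, are details the paper leaves implicit.
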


Let $W=(u_0,\ldots,u_m)$ be a walk of length $k$ in $\M$ and let $c_i$ be the colour of the edge $u_{i-1}u_i$. In other words, $W$ traces edges of colours $c_1,c_2,\ldots,c_m$, in that order. Then, every lift of $W$ must be a walk that traces edges following the same colour sequence. This means that if $\varphi \in \Aut(\M^\omega)$ maps a vertex $(u_0,s)$ to $(u_0,t)$ (both in the fiber of $u_0$), then it must map the lift of $W$ based at $(u_0,s)$ to the lfit of $W$ based at $(u_0,t)$. This rather simple observation gives us the following useful lemma. 

\begin{lemma}
    \label{lem:noregular}
Let $\M$ be an $n$-maniplex and let $\omega\colon \M \to \ZZ_k$ with $k \geq 3$, such that $\M^\omega$ is a maniplex. Suppose there exists a closed walk of odd length $W = (u_0,\ldots,u_r)$ in $\M$. If $\omega(W)$ is even, then $\M^\omega$ is not regular.
\end{lemma}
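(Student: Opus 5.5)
Suppose, for contradiction, that $\M^\omega$ is regular. We will find two flags in the fibre of $u_0$ that no automorphism can interchange. Since $\Aut(\M^\omega)$ is then transitive on flags, there is some $\varphi\in\Aut(\M^\omega)$ with $(u_0,s)\varphi=(u_0,t)$ for any $s,t\in\ZZ_k$. By the observation made just before the statement, such a $\varphi$ preserves colours. So it maps the lift of $W^b$ based at $(u_0,s)$ onto the lift of $W^b$ based at $(u_0,t)$ for every $b$, because both lifts follow the same colour sequence. In particular, the lift based at $(u_0,s)$ is closed if and only if the lift based at $(u_0,t)$ is closed. Taking $b=1$, this means that the closedness of the lift of $W$ based at $(u_0,i)$ must be independent of $i\in\ZZ_k$.

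Now apply \Cref{lem:oddlift}, since $W$ has odd length. The lift of $W$ based at $(u_0,i)$ is closed exactly when $\omega(W)\equiv 2i \pmod{k}$. We therefore need one index $i$ with $2i\equiv\omega(W)$ and another index $i'$ with $2i'\not\equiv\omega(W)$; these give a closed and a non-closed lift respectively.

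Existence of $i$: read $\omega(W)$ as an even integer representative, say $\omega(W)=2c$, which is how I interpret the hypothesis ``$\omega(W)$ is even''. Then $i=c$ satisfies $2i\equiv\omega(W)$.

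Existence of $i'$: the map $i\mapsto 2i$ on $\ZZ_k$ has fibres of size at most $2$, since $2i=2j$ forces $i-j\in\{0,k/2\}$. Because $k\geq 3$, the fibre over $\omega(W)$ has size at most $2<k$, so some $i'$ has $2i'\not\equiv\omega(W)$.

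With these two indices, the lift based at $(u_0,c)$ is closed and the lift based at $(u_0,i')$ is not. Hence no automorphism maps $(u_0,c)$ to $(u_0,i')$, which contradicts regularity.

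The main obstacle is the meaning of ``$\omega(W)$ is even'' when $k$ is odd, where every residue is even. Under the integer-representative reading above, the argument only uses the existence of a solution to $2i\equiv\omega(W)$. When $k$ is odd such a solution always exists, and when $k$ is even it exists exactly when $\omega(W)$ is even as a residue, so both cases are covered. We also need $W$ to be closed so that its lifts start and end in the fibre of $u_0$, which is part of the hypothesis.
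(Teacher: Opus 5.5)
Your proof is correct and follows the paper's argument. You pick one base point $(u_0,c)$ with $2c\equiv\omega(W)$, whose lift of $W$ is closed by \Cref{lem:oddlift}, and another base point whose lift is open, so no colour-preserving automorphism can map one to the other. The only difference is cosmetic: the paper takes the second index to be $c+1$ directly, using $2\not\equiv 0 \pmod k$ for $k\geq 3$, whereas you obtain it by bounding the fibres of the doubling map on $\ZZ_k$.
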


\begin{proof}
Suppose $\omega(W) \equiv 2r$ for some integer $r \leq \lfloor \frac{k}{2} \rfloor$. Then, by Lemma \ref{lem:oddlift}, the lift of $W$ at $(u_0,r)$ is a closed walk, since $\omega(W) \equiv 2r$. Set $s = r+1$ and observe that $\omega(W) \not\equiv 2s$ and thus the lift of $W$ at $(u_0,s)$ is an open walk. Hence, no automorphism of $\M^\omega$ can map  $(u_0,r)$ to $(u_0,s)$, as it would map a closed walk to an open walk. Therefore, $\M^\omega$ is not regular.
\end{proof}

The following theorem is a particular case of \cite[Theorem 3]{WILSON2008359}, which states that if $\omega$ is an $k$-weight function, $k>2$, for a graph $\Gamma$, then the cross-cover $\Gamma^\omega$ is unstable. The theorem is proved in \cite{WILSON2008359} by exhibiting an unexpected automorphism in the canonical double cover of $\Gamma^\omega$. In the particular case where $\Gamma^\omega$ is a maniplex, this automorphisms preserves the colouring of the edges, and thus, it is also a (maniplex) automorphism of the canonical double cover. 


\begin{theorem}
\label{theo:cross}
Let $\M$ be an $n$-maniplex and let $\omega \colon \M \to \ZZ_k$ with $k \geq 3$. If $\M^\omega$ is a non-orientable maniplex, then $\M^\omega$ is unstable.
\end{theorem}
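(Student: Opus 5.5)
The plan is to exhibit an explicit automorphism of the canonical double cover $\widetilde{\M^\omega}$ and show it is not expected, following the idea of \cite[Theorem 3]{WILSON2008359}. Since $\M^\omega$ is a non-orientable maniplex, $\widetilde{\M^\omega}$ is connected and hence a maniplex. Its flag set is $\V(\M)\times\ZZ_k\times\ZZ_2$. An edge $e=uv$ of $\M$ of colour $c$ produces, for each $i\in\ZZ_k$ and $a\in\ZZ_2$, a $c$-edge joining $(u,i,a)$ and $(v,\omega(e)-i,a+1)$.

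Define $\Phi$ on flags by
\begin{align*}
(u,i,a)\Phi = (u,\, i+(-1)^a,\, a),
\end{align*}
that is, shift the $\ZZ_k$-coordinate by $+1$ on the sheet $a=0$ and by $-1$ on the sheet $a=1$. This map is clearly a bijection.

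Next we check that $\Phi$ preserves coloured edges. Consider a $c$-edge with endpoints $(u,i,0)$ and $(v,\omega(e)-i,1)$. Their images are $(u,i+1,0)$ and $(v,\omega(e)-i-1,1)$. These are again joined by a $c$-edge, because $\omega(e)-(i+1)=\omega(e)-i-1$. The same computation handles the case where the endpoint $(u,i,1)$ lies on sheet $1$: the images are $(u,i-1,1)$ and $(v,\omega(e)-i+1,0)$, and $\omega(e)-(i-1)=\omega(e)-i+1$. Since $\Phi$ is a bijection sending $c$-edges to $c$-edges on a finite graph, it is a maniplex automorphism of $\widetilde{\M^\omega}$.

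The main point, and the only place where $k\ge 3$ is used, is that $\Phi$ is unexpected. An expected automorphism has the form $(x,a)\mapsto(x\psi,a+b)$ with $\psi\in\Aut(\M^\omega)$ and $b\in\ZZ_2$, where $x$ is a flag of $\M^\omega$. Suppose $\Phi$ were of this form. Because $\Phi$ preserves the sheet coordinate, we must have $b=0$. Then $\psi$ would have to agree with the first two coordinates of $\Phi$ on both sheets. Sheet $0$ forces $(u,i)\psi=(u,i+1)$, while sheet $1$ forces $(u,i)\psi=(u,i-1)$. This requires $1\equiv -1 \pmod k$, which is impossible since $k\ge 3$. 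Hence $\Aut(\widetilde{\M^\omega})$ strictly contains the group of expected automorphisms, and $\M^\omega$ is unstable.
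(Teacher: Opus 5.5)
Your proof is correct and follows the same route as the paper, which invokes Wilson's Theorem 3 (proved by exhibiting an unexpected automorphism of the canonical double cover) and notes that this automorphism preserves edge colours. Your map $(u,i,a)\mapsto(u,i+(-1)^a,a)$ is such an automorphism, and you carry out both the colour check and the unexpectedness check explicitly. One small nitpick: the appeal to finiteness is unnecessary, since maniplexes need not be finite here and $\Phi^{-1}\colon(u,i,a)\mapsto(u,i-(-1)^a,a)$ passes the same edge check, so $\Phi$ is an automorphism in general.
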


\section{Extensions}
\label{sec:ext}

An extension of an $n$-maniplex $\M$ is an $(n+1)$-maniplex whose facets are all isomorphic to $\M$. Although several constructions (see for instance \cite{cayleyext,twistop}) exist for producing extensions of maniplexes, we will use the colour-coded extension method defined in \cite{twistop}. We will give a definition of the construction and provide some basic facts related to it that will be useful to us. Further details can be found in \cite{twistop}.

Let $\M$ be an $n$-maniplex and let $\F_\M$ and $\textrm{Fac}(\M)$ denote the set of flags and the set of facets of $\M$, respectively. For a positive integer $\ell$, an {\em $\ell$-colouring} for $\M$ is a function $\mathcal{C} \colon \textrm{Fac}(\M) \to \{1,\ldots,\ell\}$. The {\em colour} of a facet is its image under $\C$ and the colour of a flag is simply the colour of the facet that contains it. 

The colour-coded extension of $\M$ relative to an $\ell$-coloruing $\mathcal{C}$, or simply the $\C$-extension of $\M$, is the $(n+1)$-maniplex $2^{(\M,\C)}$ with flag-set $\F_\M \times \ZZ_2^\ell$ where the $i$-neighbours of a flag $(u,x)$ are given by

\[
(u,x)^i=
\begin{cases}
(u^i,x) & \text{if $i<n$},  \\
(u,x^j) & \text{if $i = n$},
\end{cases}\\
\]
where $j$ is the colour of $u$, and $x^j$ is the element of $\ZZ_2^\ell$ that differs from $x$ only in its $j$-th entry.

The $\C$-extension $2^{(\M,\C)}$ is a well-defined $(n+1)$-maniplex. It has $2^\ell$ facets, all isomorphic to $\M$. To see this, note that if $x \in \ZZ_2^\ell$, then the subgraph $F_x$ induced by all flags whose second coordinate is $x$, only has $i$-edges with $i < n$. In fact, $F_x$  is a connected component of the subgraph of $2^{(\M,\C)}$ induced by all $i$-edges with $i < n$. In other words, each $F_x$ is a facet of $\extm$, and is therefore an $n$-maniplex in its own right. One can verify that the projection $\tilde{\pi} \colon \extm \to \M$ (restricted to $F_x$) maps $F_x$ to $\M$ isomorphically.

A symmetry $\varphi$ of $\M$ extends naturally to a symmetry of $2^{(\M,\C)}$ when it is `compatible' with the colouring $\C$. Let us be more precise by introducing some terminology. We say $\C$ is {\em $\varphi$-invariant} if $\varphi$ induces a permutation of the chromatic classes of $\M$. That is, if the mapping $\C(F) \mapsto \C(\varphi(F))$ is a well-defined bijection of the set of colours $\{1,\ldots,\ell\}$ onto itself. We may slightly abuse notation, and let $\varphi(c)$ denote the image of a colour $c$ under the permutation induced by $\varphi$. Further, if $x \in \ZZ_2^\ell$ and $x_i$ denotes the $i$-th coordinate of $x$, we let $\varphi(x)$ be the element $(x_{\varphi^{-1}(1)},x_{\varphi^{-1}(2)},\ldots,x_{\varphi^{-1}(\ell)})$. 


With the notation from the above paragraph, if $\C$ is $\varphi$-invariant, then $\varphi$ extends to an automorphism $\tilde{\varphi}$ of $2^{(\M,\C)}$  given by
\begin{align}
\label{eq:tildes}
(u,x)\tilde{\varphi}= (u\varphi, \varphi(x)).
\end{align}

If $\C$ is $\varphi$-invariant for all $\varphi \in \Aut(\M)$, then we say $\C$ is {\em $\Aut(\M)$-invariant}. In that case, all automorphisms of $\M$ extend to an automorphism of $\extm$. However, it is important to point out that $\extm$ also admits automorphisms that do not come from automorphisms of $\M$. For instance, for all $j \in \{1,\ldots,\ell\}$ the mapping 
\begin{align}
\label{eq:taus}
\tau_j \colon (u,x) \mapsto (u,x^j)    
\end{align}
is an automorphism of $\extm$ that maps every flag of colour $j$ to its $n$-adjacent flag.

This means that highly symmetric maniplexes extend to highly symmetric maniplexes when the colouring is $\Aut(\M)$-invariant. 



As we will see in \Cref{sec:extweight}, $\Aut$-invariant colourings will play a central role in proving \Cref{theo:main}. Naturally, the number of $\Aut(\M)$-invariant colourings that can be defined for a given maniplex $\M$ depends on the action of $\Aut(\M)$ on the set of facets of $\M$. However, all maniplexes admit one $\Aut$-invariant colouring (yielding non-degenerate maniplexes), regardless of this action.\\

{\bf [C.1] Total colouring.}
Let $\M$ be a maniplex and consider an $\ell$-colouring $\hat{\C}$ where $\ell$ is the number of facets of $\M$. That is, all facets of $\M$ receive different colours. The $\hat{\C}$-extension of $\M$ is usually denoted $\hat{2}^{\M}$. This extension was first defined in \cite{DANZER1984115} for polytopes, and was generalised to maniplexes in \cite{twistop}, and has since been used frequently to construct regular maniplexes.

In the particular case where $\M$ is itself a colour-coded extension of some $(n-1)$-maniplex $\mathcal{N}$, then we have some information about the action of $\Aut(\M)$ on its facets, and we can guarantee that $\M$ admits an additional $\Aut(\M)$-invariant colouring, which we describe below.\\

{\bf [C.2] Antipodal colouring.}
For $x \in \ZZ_2^\ell$, we define the {\em antipode} of $x$ as the unique element of $\ZZ_2^\ell$ that differs from $x$ in every entry. Clearly, antipodality is an equivalence relation with classes of equivalence of size $2$. We say two facets of $\extm$, $F_x$ and $F_y$, are {\em antipodal} if $x$ and $y$ are antipodal elements of $\ZZ_2^\ell$. An {\em antipodal colouring} of $\extm$ is a colouring in which two facets receive the same colour if and only if they are antipodal. All antipodal colourings are equivalent, and thus it makes sense to talk about {\em the} antipodal colouring of $\extm$, denoted $\C^{\times}$.\\

\begin{lemma}
Let $\N$ be an $(n-1)$-maniplex with an $\ell$-colouring $\C$ and let $\M := 2^{(\N,\C)}$. Then the antipodal colouring $C^\times$ of $\M$ is $\Aut(\M)$-invariant.    
\end{lemma}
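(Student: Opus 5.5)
The plan is to characterise antipodality of facets of $\M=2^{(\N,\C)}$ purely in terms of the graph structure of $\M$. Any automorphism of $\M$ preserves that structure, so it must send antipodal pairs of facets to antipodal pairs. Once this is shown, $\varphi\in\Aut(\M)$ permutes the antipodal classes, which are exactly the chromatic classes of $\C^\times$. The map $\C^\times(F)\mapsto\C^\times(F\varphi)$ is then a well-defined bijection on colours, i.e. $\C^\times$ is $\varphi$-invariant.

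\textbf{Step 1: the facet graph.} Define the facet graph $\Phi(\M)$: its vertices are the facets $F_x$, $x\in\ZZ_2^\ell$, and two facets are adjacent when some $(n-1)$-edge of $\M$ joins a flag of one to a flag of the other. Here $n-1$ is the new colour of the extension, since $\N$ has rank $n-1$. By the definition of the $\C$-extension, the $(n-1)$-neighbour of $(u,x)$ is $(u,x^j)$, where $j$ is the colour of $u$. Hence $F_x$ is adjacent to $F_y$ exactly when $y=x^j$ for some colour $j$ that actually occurs in $\N$.

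\textbf{Step 2: $\Phi(\M)$ is the $\ell$-cube.} I will check that every colour $j\in\{1,\ldots,\ell\}$ is attained by $\C$. If some $j$ were unused, the $j$-th coordinate of $x$ would never change along any edge of $\M$. Then $\M$ would be disconnected, contradicting that $\M$ is a maniplex. So $\Phi(\M)$ is exactly the hypercube graph $Q_\ell$ on $\ZZ_2^\ell$, and its graph distance is the Hamming distance. Two facets $F_x,F_y$ are antipodal if and only if they are at distance $\ell$ in $\Phi(\M)$. Equivalently, each facet has a unique facet at maximum distance from it, namely its antipode.

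\textbf{Step 3: automorphisms act on $\Phi(\M)$.} Let $\varphi\in\Aut(\M)$. Since $\varphi$ preserves edge colours, it maps components of $\M_{\overline{n-1}}$ to components, so it permutes the facets bijectively. It also maps $(n-1)$-edges to $(n-1)$-edges, so it preserves adjacency in $\Phi(\M)$. Thus $\varphi$ induces an automorphism of $Q_\ell$, which preserves graph distance and hence maps antipodal pairs to antipodal pairs. This gives the invariance described in the first paragraph.

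The only mildly delicate point is Step 2, where one must make sure $\Phi(\M)$ is the full cube rather than a subgraph of it. This is handled by the connectivity argument; the remaining steps are routine.
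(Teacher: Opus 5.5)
Your proof is correct and follows the paper's argument: both pass to the graph whose vertices are the facets (adjacent when joined by an edge of the new colour), identify it with the $\ell$-cube, and observe that each automorphism of $\M$ induces a cube automorphism, which must preserve antipodal pairs. The paper phrases the final step in terms of block systems and takes a detour through the regular subgroup $\langle\tau_1,\ldots,\tau_\ell\rangle$. You instead make that step explicit through graph distance, and you also use connectivity to justify that the facet graph is the full cube, which the paper leaves implicit.
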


\begin{proof}
First, observe that the group  $H = \langle \tau_1,\ldots,\tau_\ell \rangle$, where each $\tau_i$ is the symmetry defined in \Cref{eq:taus}, acts transitively on the facets of $\M$. Clearly, the full group $\Aut(\M)$ is facet-trasitive as well. It follows that a colouring of (the facets of ) $\M$ is $\Aut(\M)$-invariant if and only if the chromatic classes form a system of blocks of imprimitivity. 

Now, consider the graph $Q_\ell$ whose vertices are the facets of $\M$, where two vertices are adjacent if the corresponding facets are joined by an $n$-edge in $\M$ (that is, if there in an $n$-edge having one endpoint in the first facet, and the other in the second facet). The graph $Q_\ell$ is isomorphic to the $\ell$-cube graph and every automorphism of $\M$ (which permutes its facets) induces a permutation of the vertices of $Q_\ell$. Moreover, since an automorphism of $\M$ maps $n$-edges to $n$-edges, the induced permutation on the vertices of $Q_\ell$ must map adjacent vertices to adjacent vertices. In other words, every automorphism of $\M$ induces a graph automorphism of $Q_\ell$. Let $G = \Aut(\M)$ and let $G'\leq\Aut(Q_\ell)$ be the group of automorphisms of $Q_\ell$ induced by $G$. Now consider the group $H = \langle \tau_1,\ldots,\tau_\ell \rangle$ and observe that $H$ acts not only transitively, but regularly, on the set of facets of $\M$. It follows that the group $H'$ of automorphisms of $Q_\ell$ induced by $H$ acts regularly on the vertices of $Q_\ell$. Moreover, one can see that the set of all pairs of antipodal vertices of $Q_\ell$ is a block system for $H'$ (indeed, $H'$ acts on the main diagonals of the $\ell$-cube). Since this is also a block system for the full automorphism group of $Q_\ell$, and $H' \leq G' \leq \Aut(Q_\ell)$, we see that it is a block system for $G'$ too. This block system corresponds to the partition into chromatic classes (of the facets of $\M$) with the antipodal colouring $\C^\times$ of $\M$, and thus, this partition must be a block system for $\Aut(\M)$ (acting on facets). We conclude that the antipodal colouring is $\Aut(\M)$-invariant.
\end{proof}

\begin{remark}
If $\N$ is an $n$-maniplex and $\M$ is an extension of $\N$ with $k>1$ facets, then the extensions of $\M$ obtained by the total and antipodal colourings are non-isomorphic. This is easily seen by noting that the number of facets of $\hat{2}^\M$ is $2^k$,  while $2^{(\M,\C^\times)}$ has $2^{k/2}$ facets.
\end{remark}

\section{Extended weight functions}
\label{sec:extweight}

We know how to extend maniplexes to obtain maniplexes of higher rank by means of colourings of their facets. In this section we will describe a way to extend weight functions as well. We will want, whenever we are extending a maniplex $\M$ with a colouring $\C$, to extend any weight function of $\M$ to a weight function of $\extm$ in a way that some properties of the cross-cover of $\M$ also hold for the cross-cover of $\extm$. In particular, if the cross-cover of $\M$ is a non-orientable maniplex, then so will be the cross-cover of $\extm$.

We define the {\em parity} of an element $x \in \ZZ_2^k$, denoted $\sigma(x)$, as $(-1)^b$ where $b$ is the
number of entries of $x$ that are equal to $1$. In other words, $\sigma(x)$ is $1$ if $x$ has an even number of entries equal to $1$, and is $-1$ otherwise. The parity of a flag $(u,x)$ of $2^{(\M,\C)}$ is simply the parity of $x$. Observe that the endpoints of an $i$-edge have distinct parity if $i=n$, but have the same parity if $i < n$. We can thus define the parity of an $i$-edge whenever $i < n$, as the parity of its endpoints.

Let $\M$ be an $n$-maniplex with an $\ell$-colouring $\C$ and a weight function $\omega\colon \M \to \ZZ_k$. Let $\tilde{\pi}$ denote the projection $2^{(\M,\C)} \to \M$. We define the extension of $\omega$ relative to $\C$ as the function 
$\omega_\C \colon \extm \to \ZZ_k$ mapping each $i$-edge $e$ of $2^{(\M,\C)}$ to 

\[
\omega_\C(e)=
\begin{cases}
\sigma(e)\omega(\tilde{\pi}(e)) & \text{ if $i < n$},   \\
0 & \text{ if $i = n$}.
\end{cases}\\
\]

The weight function $\omega_\C$ extends $\omega$ in the following sense. If $F_x$ is the facet of $\extm$ associated to an even $x \in \ZZ_2^\ell$, then not only is $F_x$ a maniplex isomorphic to $\M$, but the functions $\omega_\C$ (restricted to $F_x$) and $\omega$ coincide.

Consider a walk $\tilde{W}$ in $\extm$ that traces no edges of colour $n$. Then, $\tilde{W}$ is contained within a facet $F_x$ of $\extm$ and thus, all edges it traces have the same parity $\sigma(x)$. By definition all even edges have the same weight as their projection, while the weight of an odd edge is the inverse of that of its projection. Therefore, if $W = \tilde{\pi}(\tilde{W})$ we have
\begin{align}
\label{eq:pesocaminosext}
\omega_\C(\tilde{W}) = \sigma(x)\omega(W).
\end{align}

One of the implications of this observation is that maniplexes whose cross-cover has the string property extend to maniplexes with this same attribute.

\begin{lemma}
\label{lem:extstring}
Let $\M$ be an $n$-maniplex, let $\C \colon \textrm{Fac}(\M) \to \{1,\ldots,\ell\}$ be a colouring of the facets of $\M$ and let $\omega \colon \E(\M) \to \ZZ_k$. If $\M^\omega$ has the string property, then $(2^{(\M,\C)})^{\omega_\C}$ has the string property. 
\end{lemma}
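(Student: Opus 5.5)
We reduce the statement to \Cref{lem:cuadrados}. Since $\extm$ is an $(n+1)$-maniplex, it suffices to show that $\omega_\C(\tilde{C})=0$ for every $4$-cycle $\tilde{C}$ of $\extm$ whose colours alternate between $i$ and $j$ with $|i-j|>1$. Throughout we assume $i<j$, and we split into two cases according to whether $n$ occurs among the colours $i,j$.

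\emph{Case $j<n$.} The cycle $\tilde{C}$ traces no $n$-edges, so it lies entirely inside one facet $F_x$. By \Cref{eq:pesocaminosext} we get $\omega_\C(\tilde{C})=\sigma(x)\,\omega(C)$, where $C=\tilde{\pi}(\tilde{C})$. Because $\tilde{\pi}$ restricted to $F_x$ is an isomorphism onto $\M$, the projection $C$ is itself a $4$-cycle of $\M$ with the same alternating non-consecutive colours $i,j$. The hypothesis that $\M^\omega$ has the string property, together with \Cref{lem:cuadrados}, gives $\omega(C)=0$, and therefore $\omega_\C(\tilde{C})=0$.

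\emph{Case $j=n$.} Here $i\le n-2$. Write $\tilde{C}=((u,x),(u^i,x),(u^i,y),(u,y),(u,x))$; this is an $(i,n)$-alternating square, which may also be traversed starting with the $n$-edge. The key observation is that, since $i\neq n-1$, the flags $u$ and $u^i$ lie in the same facet of $\M$. They therefore have the same colour $c$, and so $y=x^c$ at both $n$-edges. This confirms the shape of the square.

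The two $n$-edges have weight $0$. The two $i$-edges both project to the same edge $uu^i$ of $\M$, with weight $w$ say, and they carry the parities $\sigma(x)$ and $\sigma(y)=-\sigma(x)$. Their weights are thus $\sigma(x)w$ and $-\sigma(x)w$.

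These two $i$-edges sit in positions $0$ and $2$ of the walk, or in positions $1$ and $3$ if we start with an $n$-edge. In either case they enter the alternating sum with the same sign, so the total is $\pm(\sigma(x)w-\sigma(x)w)=0$. Reversing the direction of traversal only changes the overall sign, so the conclusion does not depend on the orientation.

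The main obstacle is this sign bookkeeping in the second case. One must check that the alternating signs in the definition of $\omega(W)$ do not cancel the parity flip but instead combine with it to give zero. The positions of the two $i$-edges have the same parity, so their signs agree and the parity flip $\sigma(y)=-\sigma(x)$ produces the cancellation. Once this is verified, \Cref{lem:cuadrados} yields the string property for $(\extm)^{\omega_\C}$.
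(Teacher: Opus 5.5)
Your proof is correct and follows the paper's argument essentially verbatim. You reduce via \Cref{lem:cuadrados}, handle squares inside a facet with \Cref{eq:pesocaminosext}, and for $(i,n)$-squares use that the two $i$-edges have opposite parity while the $n$-edges have weight $0$; your extra checks (that $u$ and $u^i$ share a facet of $\M$ because $i\neq n-1$, so both $n$-edges flip the same coordinate, and that the two $i$-edges occupy positions of equal parity in the alternating sum) just make explicit what the paper leaves implicit.
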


\begin{proof}
Let $i,j \in \{0,\ldots,n\}$ be non-consecutive colours and consider a $4$-cycle $C$ in $2^{(\M,\C)}$ of alternating colours $i$ and $j$. By Lemma \ref{lem:cuadrados}, it suffices to show that $\omega_\C(C) \equiv 0 \pmod k$. We have two cases. 



First suppose that $i,j < n$. Then $C$ must be of the form $((u,x),(u^i,x),(u^{ij},x),(u^{j},x),(u,x))$, for some flag $u$ of $\M$ and some $x \in \ZZ_2^\ell$. Moreover $C' = (u,u^i,u^{ij},u^{j},u)$ is a $4$-cycle of $\M$ of alternating colour $i$ and $j$ and by Equation \ref{eq:pesocaminosext} we have $\omega_\C(C) \in \{\omega(C'),-\omega(C')\}$. However, since $\M^\omega$ has the string property, then by Lemma \ref{lem:cuadrados}, $\omega(C')\equiv 0 \pmod k$ and thus $\omega_\C(C) \equiv 0$.

Now, suppose that $i < n$ and $j = n$. Then $C=((u,x),(u^i,x),(u^{in},y),(u^{n},y),(u,x))$ for some $u \in \F_\M$ and $x,y \in \ZZ_2^\ell$. Note that  $x$ and $y$ must have different parity and thus the $i$-edge $e_x$ joining $(u,x)$ to $(u^i,x)$ and the $i$-edge $e_y$ joining $(u^{in},y)$ to $(u^{n},y)$ have weights that are inverse to each other. Since the two remaining edges of $C$ have colour $n$, they must have weight $0$, by the definition of $\omega_\C$. Therefore $\omega_\C(C)= \omega_\C(e_x) + \omega_\C(e_y) = 0$.
\end{proof}

Recall that the facets of $\extm$ are in correspondence with the elements of $\ZZ_2^\ell$, and that we denote by $F_x$ the facet containing all flags with second coordinate $x$. The following lemma will help us better describe (the facets of) the cross-cover $(\extm)^{\omega_\C}$. We may allow a slight abuse of notation and use the symbol $\omega$ to denote both the weight function on $\M$ and its restriction to a subgraph of $\M$. Then, if $\mathcal{N} \leq \M$ the graph  $\mathcal{N}^\omega$ is a well-defined cross-cover.

\begin{lemma}
 \label{lem:facetasbonitas}
Let $\M$ be an $n$-maniplex with a weight function $\omega \colon \E(\M) \to \ZZ_k$ and a colouring $\C\colon \M \to \{1,\ldots,\ell\}$. Let $x \in \ZZ_2^\ell$ and let $F_x$ be the corresponding facet of $\extm$.  Then $(F_x)^{\omega_\C} \cong \M^\omega$. 
\end{lemma}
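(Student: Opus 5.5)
We will apply \Cref{lem:isolift} with the graph isomorphism $\varphi := \tilde{\pi}|_{F_x} \colon F_x \to \M$ and a suitable automorphism $\alpha$ of $\ZZ_k$. Here $(F_x)^{\omega_\C}$ denotes the cross-cover of the facet $F_x$, viewed as a graph carrying the restriction of $\omega_\C$. Its edges are only $i$-edges with $i<n$, since $n$-edges leave $F_x$.

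The first step is to recall, from the discussion after the definition of $\extm$, that $\tilde{\pi}$ restricted to $F_x$ is an isomorphism onto $\M$: $(u,x)\mapsto u$ is bijective on flags, and it maps the $i$-edge $\{(u,x),(u^i,x)\}$ to the $i$-edge $\{u,u^i\}$ for every $i<n$. It is therefore a colour-preserving graph isomorphism $F_x \to \M$.

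The second step is to compare weights. Every edge $e$ of $F_x$ has colour $i<n$ and parity $\sigma(x)$, so by the definition of $\omega_\C$ we get $\omega_\C(e) = \sigma(x)\,\omega(\tilde{\pi}(e))$. We therefore take $\alpha \colon \ZZ_k \to \ZZ_k$ to be multiplication by $\sigma(x) \in \{1,-1\}$. This is a group automorphism of $\ZZ_k$: it is the identity when $x$ is even and inversion when $x$ is odd. Applying \Cref{lem:isolift} to the isomorphism $\varphi^{-1} \colon \M \to F_x$ with this $\alpha$ (note $\alpha = \alpha^{-1}$), we have $\omega_\C(e\varphi^{-1}) = \alpha(\omega(e))$ for every edge $e$ of $\M$. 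The lemma then gives the isomorphism
\[
(u,i) \mapsto \bigl((u,x),\,\sigma(x)\,i\bigr)
\]
from $\M^\omega$ onto $(F_x)^{\omega_\C}$. Since the isomorphism preserves colours of projected edges, it is even an isomorphism of edge-coloured graphs.

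There is no real obstacle. The only point needing care is the odd-parity case, where the weights are negated rather than equal. This is exactly handled by allowing a nontrivial $\alpha \in \Aut(\ZZ_k)$ in \Cref{lem:isolift}: the vertex $(v,\omega(e)-i)$ is sent to $(v,-\omega(e)+i) = (v,\alpha(\omega(e))-\alpha(i))$, as required.
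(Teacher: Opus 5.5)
Your proposal is correct and matches the paper's proof: both apply \Cref{lem:isolift} to the projection isomorphism $\tilde{\pi}|_{F_x}$ together with the automorphism $i \mapsto \sigma(x)i$ of $\ZZ_k$. The only difference is that you apply it to the inverse map $\M \to F_x$ rather than $F_x \to \M$, which is immaterial because this $\alpha$ is an involution.
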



\begin{proof}
Observe that $F_x$ is isomorphic to $\M$ via the projection $\tilde{\pi} \colon (u,x) \mapsto u$. Let $\varphi$ denote this isomorphism, so that $\tilde{\pi}(\tilde{e}) = \tilde{e}\varphi$ for all edge $\tilde{e}$ of $F_x$. Further, let $\alpha\colon \ZZ_k \to \ZZ_k$ be given by $\alpha(i) = \sigma(x)i$. Clearly, $\alpha \in \Aut(\ZZ_k)$ and if $\tilde{e}$ is an edge of $F_x$, then 
\begin{align*}
\alpha(\omega_\C(\tilde{e})) = \sigma(x)\omega_\C(\tilde{e}) = \sigma(x)\sigma(x)\omega(\tilde{\pi}(\tilde{e})) =  \omega(\tilde{\pi}(\tilde{e})) = \omega(\tilde{e}\varphi).  
\end{align*}
Then, by Lemma \ref{lem:isolift}, $\M^\omega \cong (F_x)^{\omega_\C}$.
\end{proof}

\begin{corollary}
\label{cor:extno}
If $\M^\omega$ is non-orientable, then so is $(\extm)^{\omega_\C}$.
\end{corollary}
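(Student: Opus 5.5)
Since orientability of a maniplex means bipartiteness of its flag-graph, the plan is to exhibit an odd closed walk in $(\extm)^{\omega_\C}$. By hypothesis $\M^\omega$ is non-orientable, so it contains a closed walk $\bar W$ of odd length. Fix any $x \in \ZZ_2^\ell$ and let $F_x$ be the corresponding facet of $\extm$. By \Cref{lem:facetasbonitas} we have an isomorphism $(F_x)^{\omega_\C} \cong \M^\omega$. Let $\bar W'$ be the image of $\bar W$ under this isomorphism; it is a closed walk of the same odd length in $(F_x)^{\omega_\C}$.

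The next step is to check that $(F_x)^{\omega_\C}$ is a subgraph of $(\extm)^{\omega_\C}$. This holds because the weight function on $F_x$ is the restriction of $\omega_\C$. Hence every edge of $(F_x)^{\omega_\C}$, joining $(v,i)$ and $(w,\omega_\C(e)-i)$ for an edge $e=vw$ of $F_x$, is also an edge of $(\extm)^{\omega_\C}$. Consequently $\bar W'$ is an odd closed walk in $(\extm)^{\omega_\C}$, and so $(\extm)^{\omega_\C}$ is not bipartite.

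If $(\extm)^{\omega_\C}$ is connected, this finishes the argument. Connectivity follows from the hypothesis that $\M^\omega$ is a maniplex, so it is implicit in the statement. If it were needed explicitly, I would argue as follows: every facet cross-cover $(F_y)^{\omega_\C}$ is connected, being isomorphic to $\M^\omega$. The $n$-edges carry weight $0$, so each one joins $((u,y),i)$ to $((u,y^j),-i)$, linking the facet covers over adjacent facets. Since the facets of $\extm$ form a connected cube graph, the whole cross-cover is connected.

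The only point needing care is the subgraph claim, namely that restricting the weight function commutes with taking the cross-cover; this is immediate from the definition. I expect no real obstacle here.
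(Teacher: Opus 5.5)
Your argument is correct and is the one the paper intends (the corollary is left as an immediate consequence of Lemma~\ref{lem:facetasbonitas}): the copy $(F_x)^{\omega_\C}\cong\M^\omega$ is a subgraph of $(\extm)^{\omega_\C}$, so it carries an odd closed walk of the whole cross-cover. Your connectivity paragraph is not needed, since an odd closed walk already rules out bipartiteness on its own; the paper handles connectivity separately through Lemma~\ref{lem:extcon} in Proposition~\ref{prop:pares}.
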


\begin{lemma}
\label{lem:extcon}
Let $\M$ be an $n$-maniplex, let $\C$ be an $\ell$-colouring of the facets of $\M$ and let $\omega$ be a $k$-weight function for $\M$. If $\M^\omega$ is connected, then $(2^{(\M,\C)})^{\omega_\C}$ is also connected.
\end{lemma}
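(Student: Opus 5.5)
We show that every flag of $(\extm)^{\omega_\C}$ can be joined by a walk to a flag lying over one fixed facet of $\extm$, and that the part of the cross-cover over that facet is connected. Throughout, flags of $(\extm)^{\omega_\C}$ are written as $((u,x),i)$ with $u \in \F_\M$, $x \in \ZZ_2^\ell$ and $i \in \ZZ_k$.

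\emph{Step 1: the part over a single facet is connected.} For each $x \in \ZZ_2^\ell$, let $\pi^{-1}(F_x)$ be the subgraph of $(\extm)^{\omega_\C}$ spanned by the fibres of the flags and of the non-$n$-edges of $F_x$. This subgraph is precisely the cross-cover $(F_x)^{\omega_\C}$ of the facet with respect to the restricted weight function. By \Cref{lem:facetasbonitas} it is isomorphic to $\M^\omega$, which is connected by hypothesis. So every set $\pi^{-1}(F_x)$ is contained in a single connected component of $(\extm)^{\omega_\C}$.

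\emph{Step 2: the $n$-edges join the fibres of neighbouring facets.} Take an $n$-edge of $\extm$ from $(u,x)$ to $(u,x^j)$, where $j = \C(u)$. Its weight is $0$, so its fibre consists of the edges joining $((u,x),i)$ to $((u,x^j),-i)$ for $i \in \ZZ_k$. Any single one of these edges links the component containing $\pi^{-1}(F_x)$ with the component containing $\pi^{-1}(F_{x^j})$.

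\emph{Step 3: reach every facet.} Consider the graph on the facets $\{F_x : x \in \ZZ_2^\ell\}$ in which $F_x$ and $F_{x^j}$ are adjacent for every colour $j$ actually used by $\C$. This graph is connected, since $\extm$ itself is connected. For the quotient to be exactly the $\ell$-cube we would need $\C$ to be surjective; this holds implicitly, because otherwise $\extm$ would be disconnected. Alternatively, the argument only needs connectivity of $\extm$: every facet is joined to $F_0$ by a chain of $n$-edges, and the chain can be read off from any walk in $\extm$.

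By Step 2, adjacency of facets in this graph puts their fibres in the same component. By Step 1, each fibre $\pi^{-1}(F_x)$ lies in a single component. Induction along a path from $F_0$ to $F_x$ then puts every $\pi^{-1}(F_x)$ in the component of $\pi^{-1}(F_0)$. Since these fibres cover all flags of $(\extm)^{\omega_\C}$, the cross-cover is connected.

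\emph{Main obstacle.} The only delicate point is the identification in Step 1. The subgraph of the cross-cover lying over $F_x$ must coincide with the cross-cover of $F_x$ under the restricted weight, and this requires that no edges over $F_x$ are lost or added. It holds because the definition of cross-cover is local to edges: the edges over $F_x$ are exactly the lifts of the edges of $F_x$. The surjectivity of $\C$ is handled as in Step 3.
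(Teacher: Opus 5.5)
Your proof is correct and follows essentially the same route as the paper. Each $(F_x)^{\omega_\C}$ is connected by \Cref{lem:facetasbonitas}, weight-zero $n$-edges such as $((u,x),0)\sim((u,x^j),0)$ join the covers of adjacent facets, and the connectivity of the graph of facets finishes the argument. Your remark that surjectivity of $\C$ is forced by the connectivity of $\extm$, rather than holding ``by definition'' as the paper asserts, is a small but welcome tightening.
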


\begin{proof}

Let $x \in \ZZ_2^\ell$ and let $F_x$ be the corresponding facet of $\extm$. By Lemma \ref{lem:facetasbonitas}, $(F_x)^{\omega_C}$ is isomorphic to $\M^\omega$. In particular, $(F_x)^{\omega_C}$ is connected. Therefore, to show that $(\extm)^{\omega_\C}$ is connected it suffices that we show that for all $y \in \ZZ_2^\ell$, we can connected $(F_x)^{\omega_C}$ to $(F_y)^{\omega_C}$ (which is also connected subgraph of $(2^{(\M,\C)})^{\omega_\C}$). We will show that for all $j \in \{1,\ldots,\ell\}$ there is an $n$-edge between a flag in $(F_x)^{\omega_C}$ and a flag in $(F_{x^j})^{\omega_C}$, where $x^j$ is the element of $\ZZ_2^\ell$ differing from $x$ only in its $j$-th entry.

Now, let us get our attention to $\M$ and the colouring function $\C$. By definition $\C$ is surjective, there must be a flag $u \in \M$ such that $\C(u) = j$. This means that $\tilde{u} := (u,x)$ is a flag in $F_x$ and its $n$-neighbour is $\tilde{u}^n = (u,x^j)$, which is a flag in $F_{x^j}$. Finally, $(\tilde{u},0)$ is a flag of $(F_x)^{\omega_C}$ and its $n$-neighbour is $(\tilde{u}^n,0)$, since all $n$-edges have weight $0$ with the function $\omega_\C$. Since $(\tilde{u}^n,0)$ is a flag of $(F_{x^j})^{\omega_C}$, we conclude that $(\extm)^{\omega_\C}$ is connected.
\end{proof}

Lemmas \ref{lem:extstring} and \ref{lem:extcon}, together with Corollary \ref{cor:extno}, show that the extended colouring $\omega_\C$ behaves precisely like we need.

\begin{proposition}
\label{prop:pares}
Let $\M$ be an $n$-maniplex, let $\C$ be an $\ell$-colouring of the facets of $\M$ and let $\omega$ be a $k$-weight function for $\M$. If $\M^\omega$ is a non-orientable maniplex, then $(2^{(\M,\C)})^{\omega_\C}$ is a non-orientable maniplex. 
\end{proposition}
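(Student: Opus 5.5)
The plan is to assemble the three preceding results. A maniplex is a connected, properly edge-coloured, regular graph with the string property, and non-orientability just means it is not bipartite. So it suffices to check each of these properties for $(2^{(\M,\C)})^{\omega_\C}$.

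First, the colouring. Give each edge of $(2^{(\M,\C)})^{\omega_\C}$ the colour of its projection to $2^{(\M,\C)}$, which is $(n+1)$-valent and properly $(n+1)$-edge-coloured. Each vertex $(\tilde u,i)$ of the cross-cover has exactly one incident edge in the fibre of each edge at $\tilde u$, namely the edge to $(\tilde v,\omega_\C(e)-i)$. Hence the cross-cover is $(n+1)$-valent and properly $(n+1)$-edge-coloured.

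Second, the remaining properties, which are exactly the hypotheses of the earlier results once we unpack what it means for $\M^\omega$ to be a non-orientable maniplex.
\begin{itemize}
\item Since $\M^\omega$ has the string property, Lemma \ref{lem:extstring} gives the string property for $(2^{(\M,\C)})^{\omega_\C}$.
\item Since $\M^\omega$ is connected, Lemma \ref{lem:extcon} gives connectivity.
\item Since $\M^\omega$ is non-bipartite, Corollary \ref{cor:extno} gives non-orientability.
\end{itemize}

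The only point that needs some care is the corollary, which the paper states without proof. The justification is as follows. By Lemma \ref{lem:facetasbonitas}, for any $x$ the subgraph $(F_x)^{\omega_\C}$ is isomorphic to $\M^\omega$, so it contains an odd cycle. This subgraph is exactly the preimage of $F_x$ under the cross-cover projection, so it is a subgraph of $(2^{(\M,\C)})^{\omega_\C}$. A graph containing an odd cycle is not bipartite. Hence the cross-cover of the extension is non-orientable.

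Combining these steps gives the proposition. I expect no real obstacle; the step most worth writing out is the subgraph inclusion behind Corollary \ref{cor:extno}.
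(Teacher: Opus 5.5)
Your proposal is correct and follows the paper's own route: the proposition is obtained by combining Lemma~\ref{lem:extstring} (string property), Lemma~\ref{lem:extcon} (connectivity) and Corollary~\ref{cor:extno} (non-orientability). Your justification of the corollary via Lemma~\ref{lem:facetasbonitas} is the intended one: $\M^\omega$ has an odd cycle, $(F_x)^{\omega_\C}\cong\M^\omega$, and $(F_x)^{\omega_\C}$ is the preimage of $F_x$ in the full cross-cover. Your check of valency and proper edge-colouring is a harmless addition that the paper takes for granted.
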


\Cref{prop:pares} tells us that it suffices to find a map $\M$ with a weight function $\omega$ such that $\M^\omega$ is a non-orientable maniplex (which must necessarily be unstable by \Cref{theo:cross}), to show that there are unstable maniplexes of every rank $n \geq 3$. Finding such a pair $(\M,\omega)$ is in fact not difficult, as we will see in the following pages. However, the maniplexes obtained by extending $\M$ and $\omega$ may,  in principle, be of any symmetry type. In fact, they may very well completely lack symmetry. Since we are interested in maniplexes that are both unstable and very symmetric, extra conditions need to be imposed when considering a pair $(\M,\omega)$. This motivates the following definition.



\begin{definition}
\label{def:properpair}
Let $\M$ be an $n$-maniplex and let $\omega\colon \E(\M) \to \ZZ_k$ be a weight function for $\M$. We say $(\M,\omega)$ is a {\em proper pair} if $\M$ and $\omega$ satisfy the following:
\begin{enumerate}
    \item $\M$ is regular,
    \item $\M$ admits a closed walk $W$ of odd length such that $\omega(W)$ is even,
    \item $\M^\omega$ is a non-orientable maniplex,
    \item $\omega$ is $\Aut(\M)$-consistent.
\end{enumerate}
\end{definition}

As we will see in the following two lemmas, not only do proper pairs produce unstable $2$-orbit maniplexes, but this property is hereditary, in the sense that proper pairs extend to proper pairs when the colouring function defining the extension is $\Aut$-invariant.

\begin{lemma}
\label{lem:niceext}
Let $\M$ be an $n$-maniplex and let $\omega\colon \M \to \ZZ_k$ be a weight function for $\M$. Suppose $(\M,\omega)$ is a proper pair. If $\C$ is an $Aut(\M)$-invariant $\ell$-colouring of $\M$, then $(\extm,\omega_\C)$ is a proper pair.
\end{lemma}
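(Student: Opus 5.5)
We check the four conditions of \Cref{def:properpair} for the pair $(\extm,\omega_\C)$ one at a time, in increasing order of difficulty.

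Condition (3) is exactly \Cref{prop:pares}, since $\M^\omega$ is a non-orientable maniplex by hypothesis.

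For condition (2), let $W$ be a closed walk of odd length in $\M$ with $\omega(W)$ even. Lift it to the facet $F_0$, where $0\in\ZZ_2^\ell$ is even, by taking $\tilde W=((u_0,0),\ldots,(u_m,0))$. This is a closed walk of the same odd length that uses no $n$-edges. By \Cref{eq:pesocaminosext}, $\omega_\C(\tilde W)=\sigma(0)\omega(W)=\omega(W)$, which is even.

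For condition (1), regularity of $\extm$, we use the fact that every automorphism of $\M$ extends, because $\C$ is $\Aut(\M)$-invariant.
\begin{itemize}
\item The extensions $\tilde\varphi$ of \Cref{eq:tildes} act transitively on the flags of each facet. Given $(u,x)$ and $(v,x)$, take $\varphi$ with $u\varphi=v$; then $\tilde\varphi$ sends $(u,x)$ to $(v,\varphi(x))$.
\item The maps $\tau_j$ of \Cref{eq:taus} generate a group acting regularly on the facets.
\end{itemize}
Composing an extension with a suitable product of the $\tau_j$ therefore sends $(u,x)$ to $(v,y)$ for arbitrary flags and $x,y$. So $\Aut(\extm)$ is flag-transitive.

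Condition (4), $\Aut(\extm)$-consistency of $\omega_\C$, is the main step. Since $\extm$ is regular, $\Aut(\extm)$ is generated by the extensions $\tilde\varphi$ together with $\tau_1,\ldots,\tau_\ell$. It suffices to lift these generators, because lifts compose and a product of lifts is a lift of the product.

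For $\tau_j$, we check the criterion of \Cref{cor:autolift}.
\begin{itemize}
\item Edges of colour $n$ have weight $0$ and are mapped to edges of colour $n$.
\item An $i$-edge with $i<n$ in a facet of colour $j$ is sent to the corresponding edge of the facet $F_{x^j}$. That facet has the opposite parity, so the weight changes sign.
\item An edge in a facet of colour other than $j$ is fixed.
\end{itemize}
The problem is that $\tau_j$ changes the parity of the facets of colour $j$ only, not of all facets. Hence a single $\alpha\in\Aut(\ZZ_k)$ works only when every flag has colour $j$.

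I therefore expect to need a direct lift instead of \Cref{cor:autolift}. The map will be $(\tilde u,s)\mapsto(\tilde u\tau_j,\epsilon s)$, where $\epsilon=-1$ on flags of colour $j$ and $\epsilon=1$ otherwise.
\begin{itemize}
\item An edge of colour $i<n$ lies inside a single facet of $\M$, so $\epsilon$ is constant along it. It is then handled exactly as in \Cref{lem:isolift}, with $\alpha=\pm1$.
\item An $n$-edge joins $(\tilde u,s)$ to $(\tilde u^n,-s)$, since its weight is $0$. Its endpoints project to the same flag $u$ of $\M$, so they receive the same $\epsilon$, and the image is again an edge.
\end{itemize}
The point that must be verified carefully is this constancy of $\epsilon$ on edges, which is what allows $\alpha$ to vary from facet to facet of $\M$.

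For $\tilde\varphi$, we start from a lift $(u,s)\mapsto(u\varphi,\beta(u,s))$ of $\varphi$ given by $\Aut(\M)$-consistency. We then define the lift on $(u,x)$ by conjugating with the parity sign: $s\mapsto \sigma(\varphi(x))\,\beta(u,\sigma(x)s)$. This uses \Cref{lem:facetasbonitas} to identify each $(F_x)^{\omega_\C}$ with $\M^\omega$. Since $\varphi(x)$ has the same number of ones as $x$, the two parities agree and the formula is coherent. On $n$-edges both endpoints have the same $u$ and opposite parities, and this is consistent with weight $0$, which sends $s$ to $-s$.
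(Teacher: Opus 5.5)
\textbf{There is a genuine error in your $\tau_j$ step.} Conditions (1)--(3) are fine.

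Your lift of the extensions $\tilde\varphi$ is also sound: you take an arbitrary lift $\beta$ of $\varphi$ and transport it facet by facet through the isomorphisms of \Cref{lem:facetasbonitas}. It is in fact more careful than the paper's argument. The paper simply asserts that $\Aut(\M)$-consistency yields some $\alpha_i\in\Aut(\ZZ_k)$ with $\omega(e\rho_i)=\alpha_i(\omega(e))$, which is an unproved converse of \Cref{cor:autolift}.

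The problem is that you have misread $\tau_j$. By \eqref{eq:taus}, $\tau_j$ sends \emph{every} flag $(u,x)$ to $(u,x^j)$, not only the flags of colour $j$. It is only for flags of colour $j$ that this image happens to be the $n$-neighbour. The map you describe, which moves the flags of colour $j$ and fixes the rest, is not an automorphism of $\extm$ when $\ell>1$. Indeed, an $(n-1)$-edge from $(u,x)$ to $(u^{n-1},x)$ with $\C(u)=j\neq\C(u^{n-1})$ would be sent to the non-edge $\{(u,x^j),(u^{n-1},x)\}$.

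For the true $\tau_j$ we have $\sigma(x^j)=-\sigma(x)$ for every $x\in\ZZ_2^\ell$. So $\tau_j$ negates the weight of every edge of colour $i<n$, and it maps $n$-edges (weight $0$) to $n$-edges. Hence \Cref{cor:autolift} applies directly with $\alpha=-1$; this is exactly what the paper does for $\trho_n=\tau_s$. The obstruction you identified does not exist.

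Your workaround also fails on its own terms, so that paragraph cannot stand even as an alternative. You claim that ``an edge of colour $i<n$ lies inside a single facet of $\M$'', but this is false for $i=n-1$. Facets of $\M$ are the components of $\M_{\overline{n-1}}$, so $(n-1)$-edges are precisely the edges that can join facets of $\M$ of different colours. Along such an edge $\epsilon$ changes sign, and the image of the lifted edge is not an edge.

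Applying your map $(\tilde u,s)\mapsto(\tilde u\tau_j,\epsilon s)$ to the correct $\tau_j$ does not rescue it either. On an edge $\tilde e$ inside a facet of $\M$ of colour $\neq j$ (so $\epsilon=1$ at both ends), the image is an edge only if $\omega_\C(\tilde e)-s=-\omega_\C(\tilde e)-s$, that is, only if $2\omega_\C(\tilde e)=0$.

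To fix the proof, replace the whole $\tau_j$ paragraph by the one-line application of \Cref{cor:autolift} with $\alpha=-1$. The rest of your argument then goes through.
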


\begin{proof}
We need to show that $(\extm,\omega_\C)$ satisfies items (1)--(4) of \Cref{def:properpair}. We get items (2) and (3) almost for free. Indeed, $(\extm) ^{\omega_C}$ is a non-orientable maniplex because of \Cref{prop:pares} and thus (3) holds. Furthermore, if $C=(v_0,\ldots,v_r)$ is an odd cycle in $\M$ with $\omega(C)$ even, then $\tilde{C} = ((v_0,\bar{0}),\ldots,(v_r,\bar{0}))$ is a cycle of odd length in $\extm$ and $\omega_\C(\tilde{C}) = \omega(C)$. This gives us property (2).

Now, to show that $\extm$ is regular (item (1)), we will give a set of standard generators $\trho_0,\ldots,\trho_n$ for $\Aut(\extm)$. Let $u_0$ be the base flag of $\M$ and let $\rho_0, \ldots, \rho_{n-1}$ be the standard generators of $\Aut(\M)$ relative to $u_0$.  For $i \in \{0,\ldots,n-1\}$, let $\trho_i$ be defined as in \Cref{eq:tildes}. That is
\begin{align}
    (u,x)\trho_i = (u\rho_i,\rho_i(x)).
\end{align}
Observe that $\trho_i$ is an automorphism of $\extm$ mapping the flag $(u_0,\bar{0})$ to its $i$-neighbour $(u_0^i,\bar{0})$. 

To define the last generator, let $s=\C(u_0) $ and define 
\begin{align*}
\trho_n = \tau_s,
\end{align*}
where $\tau_s$ is as in \Cref{eq:taus}. Note that $\trho_n$ maps $(u_0,\bar{0})$ to its $n$-neighbour. This shows that $\{\trho_0,\ldots,\trho_n\}$ is the set of standard generator of $\extm$ relative $(u_0,\bar{0})$, and thus $\extm$ is regular.

Lastly, we will show that $\omega_\C$ is $\Aut(\extm)$-consistent. For this, it suffices to show that for all $i \in \{0,\ldots,n\}$, $\omega_\C$ is $\trho_i$-consistent. That is, that there exists $\tilde{\alpha}_i \in \Aut(\ZZ_k)$ such that
\begin{align}
\label{eq:alfais}
    \omega_\C(\tilde{e}\trho_i) = \tilde{\alpha}_i(\omega_\C(\tilde{e}))
\end{align}
for all edges $\tilde{e}$ of $\extm$. Note that if $\tilde{e}$ is an $n$-edge, then \Cref{eq:alfais} holds for all $\alpha_i \in \Aut(\ZZ_k)$, since all $n$-edge have weight $0$ and each $\trho_i$ maps $n$-edges to $n$-edges. Therefore, we may only concern ourselves with $j$-edges where $j<n$. 

Since $\trho_n$ is defined differently than the rest of the generators $\trho_i$, we will deal with it separately. 

For now, let $i \in \{0,\ldots,n-1\}$. Since $\omega$ is $\Aut(\M)$-consistent, there exists $\alpha_i \in \Aut(\M)$ such that $\omega(e\rho_i) = \alpha_i(\omega(e))$ for all edges $e$ of $\M$. Define $\tilde{\alpha}_i = \alpha_i$.

Let $j \in \{0,\ldots,n-1\}$ and let $\tilde{e}$ be a $j$-edge of $\extm$. Then its endpoints are $(u,x)$ and $(u^j,x)$ for some $u \in \F_\M$ and some $x \in \ZZ_2^\ell$. Without loss of generality, assume that $\sigma(x) = 1$. Let $e$ be the $j$-edge $uu^j$ of $\M$. Then, 
\begin{align}
\label{eq:pares1}
\omega_\C(\tilde{e}) = \omega(e).
\end{align}
Moreover, $\tilde{\pi}(\tilde{e}\trho_i) = e\rho_i$ and by the definition of $\omega_\C$, we have 
\begin{align*}
\omega_\C(\tilde{e}\trho_i) = \sigma(\tilde{e})\omega(e\rho_i) = \sigma(\rho_i(x))\omega(e\rho_i).
\end{align*}
However $\sigma(\rho_i(x)) = \sigma(x) = 1$ since $\rho_i$ acts on $x$ by permuting its coordinates. Thus, 
\begin{align}\label{eq:pares2}
\omega_\C(\tilde{e}\trho_i) = \omega(e\rho_i).
\end{align}
Then, we have
\begin{align*}
    \omega_\C(\tilde{e}\trho_i) = \omega (e\rho_i) = \alpha_i(\omega(e)) = \alpha_i(\omega_\C(\tilde{e})) = \tilde{\alpha}_i(\omega_\C(\tilde{e})),
\end{align*}
where the first and third equality follow from \Cref{eq:pares1} and \Cref{eq:pares2}, respectively, and the second and fourth follow from the definition of $\tilde{\alpha_i}$.

It only remains to see what happens with $\trho_n$. With the notation above, $\trho_n$ maps $(u,x)$ to $(u,x^s)$ and $(u^j,x)$ to $(u^j,x^s)$. Then, $\trho_n$ inverts the sign of $\tilde{e}$. In other words, $\omega_\C(\tilde{e}) = \omega(e)$ but $\omega_\C(\tilde{e}\trho_n) = -\omega(e)$. Define $\tilde{\alpha}_n$ as the automorphism of $\ZZ_k$ that maps every $a$ to $-a$. We have

\begin{align}
    \omega_C(\tilde{e}\trho_n) = -\omega (e) = \tilde{\alpha}_n(\omega(e)) = \tilde{\alpha}_n(\omega_C(\tilde{e})).
\end{align}
We conclude that $\omega_\C$ is $\Aut(\extm)$-consistent. The result follows.
\end{proof}

\begin{lemma}
\label{lem:nicecover}
Let $\M$ be a regular $n$-maniplex and let $\omega\colon \M \to \ZZ_k$ be a weight function for $\M$. If $(\M,\omega)$ is a proper pair and $k=4$, 
then $\M^\omega$ is an unstable $2$-orbit maniplex. 
\end{lemma}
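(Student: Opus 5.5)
Unstability comes for free: since $(\M,\omega)$ is a proper pair, $\M^\omega$ is a non-orientable maniplex, and $k=4\geq 3$, so \Cref{theo:cross} shows that $\M^\omega$ is unstable. What remains is to show that $\M^\omega$ has exactly two flag-orbits. I will bound the number of orbits from above by $2$ and from below by $2$.

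For the lower bound, property (2) of \Cref{def:properpair} gives a closed walk $W$ of odd length in $\M$ with $\omega(W)$ even. Since $k=4\geq 3$ and $\M^\omega$ is a maniplex, \Cref{lem:noregular} shows that $\M^\omega$ is not regular. Hence $\Aut(\M^\omega)$ has at least two orbits on flags.

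For the upper bound, I count automorphisms. $\M$ is regular, so $|\Aut(\M)|$ equals the number $N$ of flags of $\M$. By property (4), every $\varphi\in\Aut(\M)$ satisfies $\omega(e\varphi)=\alpha_\varphi(\omega(e))$ for some $\alpha_\varphi\in\Aut(\ZZ_4)=\{\pm1\}$; this is how consistency is verified in the proof of \Cref{lem:niceext}. By \Cref{cor:autolift}, the map $\bar\varphi\colon(u,i)\mapsto(u\varphi,\alpha_\varphi(i))$ is a graph automorphism of $\M^\omega$. It preserves colours, because colours in $\M^\omega$ are inherited from projections and $\varphi$ preserves colours, so $\bar\varphi\in\Aut(\M^\omega)$. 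Distinct $\varphi$ give distinct $\bar\varphi$, since they project differently. Therefore $|\Aut(\M^\omega)|\geq N$.

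$\M^\omega$ has $4N$ flags, and $\Aut(\M^\omega)$ acts semiregularly, so every orbit has size $|\Aut(\M^\omega)|$. This only gives at most $4$ orbits, so I need $N$ more automorphisms. The natural candidate is $\delta\colon(u,i)\mapsto(u,i+2)$. Since $-2\equiv 2\pmod 4$, the edge from $(u,i)$ to $(v,\omega(e)-i)$ is sent to the edge from $(u,i+2)$ to $(v,\omega(e)-i+2)=(v,\omega(e)-(i+2))$. So $\delta$ is a colour-preserving automorphism, and it is not one of the $\bar\varphi$, because it fixes every fibre and is not the identity. This is where $k=4$ is used. Next I check that $\delta$ commutes with each $\bar\varphi$: $\alpha_\varphi(i+2)=\alpha_\varphi(i)+2$, since $\pm 2= 2$ in $\ZZ_4$. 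The group $\langle \bar\varphi,\delta\rangle$ then has order at least $2N$, and it acts semiregularly on $4N$ flags, giving at most $2$ orbits. Together with non-regularity, $\M^\omega$ is a $2$-orbit maniplex.

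The main obstacle is ensuring that the lifts $\bar\varphi$ are well defined via a single global $\alpha_\varphi$ for every $\varphi$, not only for the generators. Composition handles this: $\alpha_{\varphi\psi}=\alpha_\varphi\alpha_\psi$ works, so consistency on the generators $\rho_i$ propagates to all of $\Aut(\M)$.
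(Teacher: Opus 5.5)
Your proof is correct and uses the same ingredients as the paper: lifts of all of $\Aut(\M)$ coming from consistency, \Cref{lem:noregular} to rule out regularity, and the fibre-preserving automorphism $(u,i)\mapsto(u,i+2)$, which exists precisely because $k=4$. The differences are only in presentation. You bound $|\Aut(\M^\omega)|\geq 2N$ and invoke semiregularity, while the paper notes that the lifted group has four orbits each meeting every fibre, so the number of orbits divides $4$, and then uses the translation to rule out $4$. You also make the instability via \Cref{theo:cross} explicit, which the paper's proof leaves implicit.
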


\begin{proof}
Let $u_0$ be a fixed flag of $\M$ and let $\rho_0,\ldots,\rho_{n-1}$ be a set of standard generators for $\Aut(\M)$. Since $\omega$ is $\Aut(\M)$-consistent, then each $\rho_i$ lifts to an automorphism $\bar{\rho_i}$ of $\M^{\omega}$. Note that $\bar{G} := \langle \bar{\rho_0},\ldots,\bar{\rho_{n-1}} \rangle$ acts on the set of fibers of $\M^\omega$, and that this action is regular. Since every fiber contains $4$ flags, $\bar{G}$ has exactly $4$ orbits on flags. Moreover, since $\bar{G} \leq \Aut(\M^\omega)$, the number $m$ of $\Aut(\M^\omega)$-orbits is either $1$, $2$ or $4$ (a divisor of $4$) and each $\Aut(\M^\omega)$-orbit has a representative in every fibre.

By \Cref{lem:noregular}, $\M^\omega$ is not regular and thus $m \neq 1$. Furthermore, $(u,i) \mapsto (u,i+2)$ is an automorphism of $\M^\omega$. It follows that the four flags contained in any given fiber cannot all belong to different orbits and thus $m \neq 4$. Then, $m$ has no choice but to be $2$.
\end{proof}

\begin{lemma}
\label{lem:stgcover}
    Let $(\M,\omega)$ be a proper pair with $\omega \colon \M \to \ZZ_4$, and let $\C$ be an $\aut(\M)$-invariant $\ell$-colouring of $\M$. If $\M^\omega$ has symmetry type $2^n_I$ then   $(\extm)^{\omega_\C}$ is a maniplex with symmetry type $2^{n+1}_{I\cup\{n\}}$. 
\end{lemma}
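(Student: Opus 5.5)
By \Cref{lem:niceext}, $(\extm,\omega_\C)$ is again a proper pair with $k=4$. Then \Cref{lem:nicecover} shows that $(\extm)^{\omega_\C}$ is an unstable $2$-orbit $(n+1)$-maniplex. It remains to identify its symmetry type. The symmetry-type graph has two vertices, and its semi-edges are exactly the colours $i$ for which some (equivalently every) flag is in the same $\Aut$-orbit as its $i$-neighbour. So we must show two things: for $i<n$, colour $i$ gives a semi-edge in $\stg((\extm)^{\omega_\C})$ if and only if it does in $\stg(\M^\omega)$; and colour $n$ always gives a semi-edge.

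\textbf{Colour $n$.} Take a flag $((u,x),a)$ of $(\extm)^{\omega_\C}$. Since $n$-edges have weight $0$, its $n$-neighbour is $((u,x^s),-a)$, where $s=\C(u)$. Using \Cref{cor:autolift}, I would lift $\tau_s$ with $\alpha=-1$: the proof of \Cref{lem:niceext} shows $\omega_\C(\tilde e\tau_s)=-\omega_\C(\tilde e)$. This gives the automorphism $((v,y),b)\mapsto((v,y^s),-b)$, which sends our flag exactly to its $n$-neighbour. Hence $n\in$ the semi-edge set.

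\textbf{Colours $i<n$.} Fix an even $x$, say $x=\bar 0$. The facet $F_x$ of $\extm$ gives, by \Cref{lem:facetasbonitas}, a subgraph of $(\extm)^{\omega_\C}$ isomorphic to $\M^\omega$ via $((u,\bar0),a)\mapsto(u,a)$; on even facets $\omega_\C$ agrees with $\omega$, so no $\alpha$ is needed. Two directions are required.

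First, suppose $\varphi\in\Aut(\M^\omega)$ maps $(u,a)$ to its $i$-neighbour. I want to extend $\varphi$ to the big cross-cover. Here I expect the main obstacle: an arbitrary automorphism of $\M^\omega$ need not be a lift. However, $\Aut(\M^\omega)$ permutes fibres, since fibres are orbits of the lifted group by the proof of \Cref{lem:nicecover}. A cleaner route is to take the $\Aut$-orbits of flags in a single fibre and use $\bar G$-transitivity on fibres. Every automorphism of $\M^\omega$ then has the form $\bar g\circ\psi$, where $\bar g$ is a lift of some $g\in\Aut(\M)$ with its $\alpha$, and $\psi$ fixes each fibre setwise. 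Such a $\psi$ is determined by its action on one fibre, and by semiregularity it is one of $(u,a)\mapsto(u,a+2)$ or $(u,a)\mapsto(u,\pm a+c)$, if these are automorphisms. I would check directly that these maps also define automorphisms of $(\extm)^{\omega_\C}$. The map $(u,a)\mapsto(u,a+2)$ is fine because $2\equiv-2$ in $\ZZ_4$, and the sign flip on odd facets is harmless. Lifts $\bar g$ extend by $\tilde g$ from \Cref{eq:tildes} together with the same $\alpha$, exactly as in \Cref{lem:niceext}. So $\varphi$ extends, and the extension sends $((u,\bar0),a)$ to its $i$-neighbour.

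Conversely, suppose an automorphism $\Phi$ of $(\extm)^{\omega_\C}$ sends $((u,\bar0),a)$ to its $i$-neighbour with $i<n$. Then $\Phi$ preserves the connected component of the subgraph of colours other than $n$, which is $(F_{\bar0})^{\omega_\C}\cong\M^\omega$. This uses the connectivity coming from \Cref{lem:facetasbonitas} together with $\M^\omega$ being connected. Restricting $\Phi$ to this component gives an automorphism of $\M^\omega$ sending $(u,a)$ to its $i$-neighbour. Hence the semi-edge set is $I\cup\{n\}$.

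The delicate step is the first direction, namely extending an arbitrary automorphism of $\M^\omega$ to the bigger cross-cover. If the fibre-wise decomposition is hard to justify, I would instead argue by orbit counting. The extended group generated by the lifts $\bar{\trho}_i$, the lifted $\tau_s$, and $(\cdot,a)\mapsto(\cdot,a+2)$ already has at most $2$ orbits. Its $F_{\bar0}$-stabiliser restricts to a group with the same $2$ orbits on $\M^\omega$, so it realises the same $i$-adjacency pattern.
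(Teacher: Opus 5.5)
\textbf{Comparison with the paper.} Your handling of colour $n$ (lifting $\tau_s$ with $\alpha=-1$) matches the paper. So does the converse for $i<n$ (an automorphism sending a flag of $(F_{\bar 0})^{\omega_\C}$ to its $i$-neighbour preserves that component and restricts to $\M^\omega$). The genuine difference is how you extend an automorphism $\beta$ of $\M^\omega$ to $\extmc$.

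The paper glues copies $\Lambda_x=\varphi_x^{-1}\beta\varphi_x$ facet by facet, keeping every $x$ fixed, and then checks $n$-edges. You instead write $\beta$ as $\bar g$ or $\bar g t$, where $\bar g$ is a lift of $g\in\Aut(\M)$ and $t\colon(u,a)\mapsto(u,a+2)$. You then extend $\bar g$ by $((u,x),a)\mapsto((ug,g(x)),\alpha_g(a))$, which is an automorphism by the computation in \Cref{lem:niceext} and \Cref{cor:autolift}, and extend $t$ by translation by $2$.

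Your route is the sounder one. In $\extmc$ the $n$-neighbour of $(v,x,\sigma(x)c)$ is $(v,x^{\C(v)},-\sigma(x)c)$. So the paper's check that $\Lambda$ preserves $n$-adjacency tacitly needs $\C(v)=\C(u)$, i.e.\ that the projection of $\beta$ fixes every colour class. This already fails for the lift of $\rho_1$ on the hemicube $\{4,3\}_3$ with the total colouring, since $\rho_1$ swaps two of its three faces. Permuting the $x$-coordinate as in \Cref{eq:tildes}, as you do, is what makes the extension work. The price is that you need the structure of $\Aut(\M^\omega)$, whereas a valid gluing would handle an arbitrary $\beta$ directly.

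\textbf{The justification you give for that structure is wrong.} Fibres are not $\bar G$-orbits: each $\bar G$-orbit meets each fibre in exactly one flag, because $\bar G$ acts regularly on fibres. Also, semiregularity alone does not force a fibre-fixing automorphism to act by the same rule $a\mapsto \pm a+c$ on every fibre.

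The correct argument is a count:
\begin{itemize}
\item $\M^\omega$ has $4|\Aut(\M)|$ flags and two flag-orbits, so $|\Aut(\M^\omega)|=2|\Aut(\M)|=2|\bar G|$.
\item $t$ is a nontrivial automorphism fixing every fibre, so $t\notin\bar G$.
\item Hence $\Aut(\M^\omega)=\bar G\cup\bar G t$. Every automorphism preserves the fibre partition, and the only fibre-fixing ones are $1$ and $t$.
\end{itemize}
Your fallback orbit-counting paragraph is essentially this argument. With it made explicit (in particular $t\notin\bar G$ and $|\bar G\langle t\rangle|=2|\bar G|$, so the restricted stabiliser has the same two orbits as $\Aut(\M^\omega)$), your proof goes through.
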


\begin{proof}
By \Cref{lem:niceext}, $(\extm,\omega_\C)$ is a proper pair and by \Cref{lem:nicecover} $(\extm)^{\omega_\C}$ is a $2$-orbit maniplex. We will first show that the symmetry-type graphs of $\M^\omega$ and $(\extm)^{\omega_\C}$ coincide in all edge and semi-edges of colour $i < n$. For this, it suffices that we show that there are two $i$-adjacent flags of $\M^\omega$ in the same orbit if and only if there are two $i$-adjacent flags of $(\extm)^{\omega_\C}$ in the same orbit.

The flags of $\M^\omega$ are pairs $(u,a)$ where $u \in \F$ and $a \in \ZZ_4$. Meanwhile, the flags of $(\extm)^{\omega_\C}$ are of the form $((u,x) , a)$, where $u \in \F$, $x \in \ZZ^\ell$ and $a \in \ZZ_4$, but we may slightly abuse notation and write them as triple $(u,x,a)$.

For each $x \in \ZZ_\ell$, let $F_x =  \{ (u,x,a) \mid u\in\F, a\in \ZZ_4\}$. That is, each $F_x$ is (the set of flags of) a facet of $\extmc$. By \Cref{lem:facetasbonitas}, each $F_x$ is isomorphic to $\M^\omega$ via the function 
\[\varphi_x \colon (u,x,a) \mapsto (u,\sigma(x)a). \]

Now, let $i<n$ and suppose there are two $i$-adjacent flags of $\M^\omega$ that lie in the same orbit. Note that this implies, since $\M^\omega$ is a $2$-orbit maniplex, that for any pair of $i$-adjacent flags of $\M^\omega$, there exists an automorphism $\beta  \in \Aut(\M^\omega)$ that interchanges them.

Let $(u,x,a)$ be a flag of $\extmc$. We will show there is an automorphism of $\extmc$ interchanging it with its $i$-neighbour. Note that since $i < n$, the $i$-neighbour of $(u,x,a)$ is a flag of the form $(u^i,x,b)$ for some $b \in \ZZ_4$. Moreover, since $\varphi_x$ is a maniplex isomorphism, the images of $(u,x,a)$ and $(u^i,x,b)$ under $\varphi_x$ are $i$-adjacent flags of $\M^\omega$. Thus there exists $\beta \in \Aut(\M^\omega)$ swapping $(u,x,a)\varphi_x$ and $(u^i,x,b)\varphi_x$. Then for $x \in \ZZ^\ell$, $\Lambda_x \colon = \varphi_x^{-1}\circ \beta \circ \varphi_x$ is an edge-colour preserving permutation of the flags of $F_x$. We want to extend $\Lambda_x$ to an automorphism of $\extmc$. Define 

\[ \Lambda = \prod_{x \in \ZZ^\ell} \Lambda_x \]

and observe that $\Lambda$ is a permutation of the flags of $\extmc$ that preserves all edges of colours $j < n$. Informally, we obtain $\Lambda$ by stitching together all the  $\Lambda_x$ with $x \in \ZZ_\ell$. To see that $\Lambda$ is an automorphism of $\extmc$ it remains to show that it maps $n$-adjacent flags to $n$-adjacent flags. Consider a flag $(u,x,a)$
and note that its $n$-neighbour is of the form $(u,x^j,b)$ for some $x^j \in \ZZ^\ell$ and $b \in \ZZ_4$ (recall that $x^j$ is the element of $\ZZ^\ell$ that differs from $x$ only in its $j$-th entry). Since $\omega_\C$ assigns a weight of $0$ to every $n$-edge of $\extm$, we know that $b = -a$. We also know that $\sigma(x^j) = -\sigma(x)$. Observe that $\varphi_x(u,x,a) = (u,\sigma(x)a) = (u,-\sigma(x^j)(-a)) = \varphi_{x^j}(u,x^j,b)$. Now, $\beta$ maps $(u,\sigma(x)a)$ to some flag $(v,c)$. We have

\[\Lambda(u,x,a) = \Lambda_x(u,x,a) = \varphi_x^{-1} (v,c) = (v,x,\sigma(x)c) \]
and 
\[\Lambda(u,x^j,b) = \Lambda_{x^j}(u,x^j,b) = \varphi_{x^j}^{-1} (v,c) = (v,x^j,\sigma(x^j)c)= (v,x^j,-\sigma(x)c).\]

Since the flags $(v,x,\sigma(x)c)$ and $(v,x^j,-\sigma(x)c)$ are $n$-adjacent in $\extmc$, we see that $\Lambda$ maps $n$-adjacent flags to $n$-adjacent flags. Thus, $\Lambda$ is an automorphism of $\extmc$. We have shown that if for some $i <n$ there are two $i$-adjacent flags of $\M^\omega$ in the same orbit, then in $\extmc$ any two $i$-adjacent flags are in the same orbit.

For the converse, suppose there are for some $i<n$, any two $i$-adjacent flags of $\extmc$ are in the same orbit. Consider two $i$-adjacent flags of $\M^\omega$, $(u,a)$ and $(u^i,b)$, and let $x \in \ZZ^\ell$. Then $(u,a)\varphi_x$ and $(u,b)\varphi_x$ are $i$-adjacent in $\extmc$ and thus there is an automorphism $\gamma$ interchanging them and the composition $\varphi_x \circ \gamma \circ \varphi_x^{-1}$ is an automorphism of $\M^\omega$ mapping $(u,a)$ to its $i$-neighbour $(u^i,b)$. Since $\M^\omega$ has exactly two orbits, it follows that any two $i$-adjacent flags are in the same orbit.

This shows that the symmetry type graphs of $\M^\omega$ and $\extmc$ coincide in all their edges and semi-edges of colours in $\nset$. To complete the proof it remains to show that the symmetry-type graph of $\extmc$ has semi-edges of colour $n$. We will show that there are two $n$-adjacent flags in $\extmc$ belonging to the same orbit.

For this, consider a pair of $n$-adjacent flags in $\extm$, say, $(u,x)$ and $(u,x^j)$ for some $j \in \{1,\ldots,\ell\}$, and recall that the automorphism $\tau_j \in \Aut(\extm)$ (see \Cref{eq:taus}) interchanges $(u,x)$ and $(u,x^j)$. Since $\ZZ_4$ is abelian $\alpha \colon \ZZ_4 \to \ZZ_4$ given by $x \mapsto -x$ is a group automorphism, and by \Cref{cor:autolift}, the mapping $\tilde{\tau}_j \colon (u,x,i) \mapsto (u,x^j,\alpha(i))$ is an automorphism of $\extmc$. Since by definition $\omega_\C$ gives weight $0$ to the $n$-edges of $\extm$, we see that in $\extmc$, the $n$-neighbour of $(u,x,i)$ is $(u,x^j,-i) = (u,x^j,\alpha(i)) = (u,x,i)\tilde{\tau}_j$. That is, $\tilde{\tau}_j$ swaps two $n$-adjacent flags. It follows that the symmetry-type graph of $\extmc$ has semi-edges of colour $n$ and is thus equal to $2^{n+1}_{I\cup{n}}$.
\end{proof}

We now have all the tools to prove the main theorem of this paper.

\section{Construction}
\label{sec:proof}

In this section we will present the construction that will constitute the proof of Theorem \ref{theo:main}. 


For a map $\M$ let $\vartheta \colon \E(\M) \to \ZZ_4$ be given by

\begin{align}
\label{eq: vartheta}
\vartheta(e)=
\begin{cases}
0 & \text{ if $e$ is a $1$-edge}   \\
1 & \text{ otherwise.}
\end{cases}
\end{align}

We will show in \Cref{prop: goodweight} that if $\M$ is non-orientable and regular of type $\{p,q\}$, with one of $p$ or $q$ odd, then $(\M,\vartheta)$ is a proper pair. We will need the following auxiliary lemma and its corollary.

\begin{lemma}
\label{lem:oddcycleevenedges}
Let $\M$ be a non-orientable regular map of type $\{p,q\}$. If one of $p$ or $q$ is odd, then $\M$ contains a closed walk of odd length with an odd number of $1$-edges.
\end{lemma}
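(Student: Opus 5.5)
Since $\M$ is non-orientable, its flag graph contains a closed walk $W$ of odd length. Let $a$, $b$, $c$ be the numbers of $0$-, $1$-, $2$-edges traced by $W$, so $a+b+c$ is odd. If $b$ is odd we are done. Otherwise $a+c$ is odd, and the plan is to append to $W$ an auxiliary closed walk that changes the parity of $b$ while keeping the total length odd. Concatenation adds lengths and colour counts, and we may always splice in closed walks based at any flag of $W$: use connectivity to go out along a path $P$, traverse the closed walk, and return along $P^{-1}$, which contributes even counts in every colour. So it suffices to find a closed walk $Z$ of even length with an odd number of $1$-edges.

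The natural candidates come from the $2$-coloured cycles. A $0$-face intersected with $\M_{\{1,2\}}$ is an alternating cycle of colours $1,2$ of length $2q$, containing $q$ edges of colour $1$. A $2$-face, that is a $\{0,1\}$-cycle, has length $2p$ and contains $p$ edges of colour $1$. If $q$ is odd, take $Z$ to be the $\{1,2\}$-cycle at some flag. Its length $2q$ is even and it has an odd number $q$ of $1$-edges. Splicing it into $W$ gives a closed walk of odd length $(a+b+c)+2q+2|P|$ with an odd number $b+q+2(\#1\text{-edges of }P)$ of $1$-edges. If instead $p$ is odd, use the $\{0,1\}$-cycle in the same way. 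By hypothesis one of $p$, $q$ is odd, so one of these two cases always applies.

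The only care needed is in the splicing bookkeeping, namely that the detour $PZP^{-1}$ changes the parity of the count of $1$-edges and not the parity of the length. This holds because $P$ is traversed twice and $Z$ has even length. I do not expect a real obstacle. Regularity is not even needed beyond guaranteeing that all faces have the uniform sizes $p$ and $q$, which is what makes every $\{0,1\}$-cycle have exactly $p$ edges of colour $1$ and every $\{1,2\}$-cycle exactly $q$.
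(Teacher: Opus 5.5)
Your argument is correct and uses the same key fact as the paper: when $p$ is odd (resp.\ $q$ is odd), an alternating $\{0,1\}$-cycle of length $2p$ (resp.\ $\{1,2\}$-cycle of length $2q$) has even length and an odd number of $1$-edges, so combining it with an odd closed walk flips the parity of the $1$-count while keeping the length odd. The paper combines them by replacing a maximal $\{0,1\}$-segment $A$ of $W$ with the complementary arc $D^{-1}$ of its $2p$-cycle, whereas your detour $PZP^{-1}$ (with $P$ even allowed to be empty, since every flag of $W$ lies on both kinds of cycle) avoids that segment bookkeeping and handles the case $q$ odd directly rather than by duality.
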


\begin{proof}
We can assume $p$ is odd (the case where $q$ is odd follows from a dual argument). Let $W = (u_0,u_1,\ldots,u_{k-1},u_0)$ be a closed walk of odd length $k$. Clearly, $W$ must trace at least one edge of each colour, since it has odd length. Suppose that the number $r>0$ of $1$-edges traced by $W$ is even. Now, consider the longest sequence of edges of alternating colours $0$ and $1$ traced by $W$ and assume, without loss of generality, that the first edge of this sequence is $u_0u_1$. That is, we can assume that for some $a>0$ the walk $A=(u_0,\ldots,u_a)$ traces only edges of colours in $\{0,1\}$ and that this is the longest such walk contained in $W$. Let $B = (u_a,u_{a+1},\ldots,u_{k-1},u_0)$ so that $W$ is simply the concatenation $AB$.

Now, the edge $u_0u_1$ belongs to a $2p$-cycle $C = (u_0,\ldots,u_a,v_0,v_1,\ldots,v_{2p-1},u_0)$ of alternating colours $0$ and $1$. Clearly, $C$ traces exactly $p$ edges of colour $1$. Let $D = (u_a,v_0,v_1,\ldots,v_d,u_0)$ so that $C = AD$.

Let $a$, $b$, and $d$ be the lengths of $A$, $B$ and $D$, respectively. Since $W=AB$ and $W$ has odd length, we see that $a$ and $b$ have different parity. Moreover, $a$ and $d$ have the same parity since $C$ has even length. Then, $BD^{-1}$ is a closed walk of odd length. let $\alpha$, $\beta$ and $\delta$ be the number of $1$-edges traced by $A$, $B$ and $D$. By our assumption, $\alpha+\beta$ is even so $\alpha$ and $\beta$ have the same parity. However, by hypothesis $\alpha+\delta$ is odd (it is equal to $p$), and thus $\beta+\delta$ must be odd. But $\beta+\delta$ is precisely the number of $1$-edges on $BD^{-1}$. This concludes the proof.
\end{proof}

\begin{corollary}
Let $\M$ be a non-orientable regular map of type $\{p,q\}$ where one of $p$ or $q$ is odd. With the weight function $\vartheta$, $\M$ contains a closed walk of odd length and even weight.
\end{corollary}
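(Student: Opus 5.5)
We derive this corollary directly from \Cref{lem:oddcycleevenedges}. That lemma supplies a closed walk $W=(u_0,\ldots,u_k)$ of odd length $k$ in $\M$ which traces an odd number $r$ of $1$-edges. It then remains to compute $\vartheta(W)$ modulo $2$ and check that it is even.

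By definition,
\[
\vartheta(W)=\sum_{i=0}^{k-1}(-1)^i\vartheta(e_i).
\]
Under $\vartheta$, each $1$-edge has weight $0$ and every other edge has weight $1$. Hence $\vartheta(W)$ is a signed sum of $\pm1$ terms, one for each edge of $W$ that is not a $1$-edge, and there are exactly $k-r$ such edges. Modulo $2$ the signs are irrelevant, since $-1\equiv 1 \pmod 2$. Therefore $\vartheta(W)\equiv k-r \pmod 2$.

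Because $k$ and $r$ are both odd, $k-r$ is even, so $\vartheta(W)$ is even.

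There is one point to check. The weight takes values in $\ZZ_4$, and we must confirm that ``even'' is well defined there. This holds because $4$ is even, so reduction modulo $2$ factors through $\ZZ_4$. In particular, an integer representative of $\vartheta(W)$ is even exactly when its class lies in $\{0,2\}\subset\ZZ_4$, which is the sense in which the condition is used in \Cref{lem:noregular} and in \Cref{def:properpair}. With this in place, $W$ is the required closed walk of odd length and even weight. The real content is entirely in \Cref{lem:oddcycleevenedges}; the parity count above is the only step needed here.
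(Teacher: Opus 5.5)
Your proposal is correct and is essentially the paper's proof: both take the odd-length closed walk with an odd number of $1$-edges from \Cref{lem:oddcycleevenedges}, note that it then traces an even number of $0$- and $2$-edges, and conclude that the weight, being a signed sum of an even number of $\pm1$ terms, is even. Your extra remark that parity is well defined in $\ZZ_4$ because $2 \mid 4$ is a harmless check that the paper leaves implicit.
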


\begin{proof}
    By \Cref{lem:oddcycleevenedges}, $\M$ has an closed walk of odd length $W$ tracing an even number of $i$-edges with $i \in \{0,2\}$. Each of these edges have weight $1$ and all other edges have weight $0$. Then, the weight of $W$ is a sum of an even amount of terms, each of which is equal to either $1$ or $-1$. It follows that the weight of $W$ is even.
\end{proof}





\begin{proposition}
\label{prop: goodweight}
Let $\M$ be a non-orientable regular map of type $\{p,q\}$ with $p$ or $q$ odd. Let $\vartheta$ be the weight function defined in \Cref{eq: vartheta}. Then $(\M,\vartheta)$ is a proper pair and the cross-cover $\M^\vartheta$ has symmetry-type $2^3_{\{1\}}$.
\end{proposition}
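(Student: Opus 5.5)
We check the four conditions of Definition~\ref{def:properpair} one at a time, and then read off the symmetry type from Lemma~\ref{lem:nicecover} together with an explicit automorphism.

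Condition (1) is a hypothesis. Condition (2) is the Corollary following Lemma~\ref{lem:oddcycleevenedges}.

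For condition (4), we use Corollary~\ref{cor:autolift}. Every automorphism $\varphi$ of $\M$ preserves edge colours. Since $\vartheta$ depends only on the colour of an edge, we get $\vartheta(e\varphi)=\vartheta(e)$ for all edges $e$. So $\alpha=\mathrm{id}$ works, and $\vartheta$ is $\Aut(\M)$-consistent.

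For condition (3), we check the three requirements in turn.
\begin{itemize}
\item \emph{String property.} By Lemma~\ref{lem:cuadrados}, it suffices to examine the only non-consecutive pair of colours, $\{0,2\}$. A $4$-cycle alternating in colours $0$ and $2$ has all four weights equal to $1$. Its weight is therefore the alternating sum $1-1+1-1=0$.
\item \emph{Connectivity.} We use Lemma~\ref{lem:connected}, so we need an even cycle whose weight is a unit in $\ZZ_4$, i.e.\ odd. The candidates are the $0$--$1$ cycle of length $2p$ and the $1$--$2$ cycle of length $2q$. On the $2p$-cycle the $0$-edges sit at alternate positions, so they all carry the same sign, and the weight is $\pm p$. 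Likewise the $2q$-cycle has weight $\pm q$. Since one of $p,q$ is odd, one of these weights is odd, which gives connectivity.
\item \emph{Non-orientability.} Take the closed walk $W$ of odd length and even weight provided in (2). By Lemma~\ref{lem:oddlift}, its lift based at $(u_0,i)$ with $2i\equiv\vartheta(W)$ is closed. A closed walk of odd length in $\M^\vartheta$ rules out bipartiteness. (Such an $i$ exists because $\vartheta(W)$ is even in $\ZZ_4$.)
\end{itemize}

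With the proper pair established, Lemma~\ref{lem:nicecover} (with $k=4$) shows that $\M^\vartheta$ is a $2$-orbit map. It remains to identify the symmetry type as $2^3_{\{1\}}$. This means showing that $1$-adjacent flags lie in the same orbit, while $0$- and $2$-adjacent flags do not.

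\emph{Colour $1$.} The $1$-edges have weight $0$, so the $1$-neighbour of $(u,i)$ is $(u^1,-i)$. Let $\rho_1$ be the automorphism of $\M$ sending $u$ to $u^1$. By Corollary~\ref{cor:autolift} with $\alpha=-\mathrm{id}$, the map $(v,i)\mapsto(v\rho_1,-i)$ is an automorphism of $\M^\vartheta$, provided $\vartheta(e\rho_1)=-\vartheta(e)$. This fails, since $\vartheta(e)=1$ and $-1=3\neq 1$. The fix is to use the map $(v,i)\mapsto(v\rho_1,\,c-i)$ and choose $c$ suitably. Checking directly whether such maps are automorphisms and which flags they swap is the step I expect to be the main obstacle.

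\emph{Colours $0$ and $2$.} The $0$-neighbour of $(u,i)$ is $(u^0,1-i)$. To show these two flags lie in different orbits, I would find an invariant separating them: a closed walk at $u$ whose lift at $(u,i)$ is closed, while its image walk at $u^0$ lifts to an open walk at $(u^0,1-i)$. This is in the spirit of Lemma~\ref{lem:noregular}, using the odd walk $W$ and the parity condition $2i\equiv\vartheta(W)$. The same argument handles colour $2$. Since the map has exactly two orbits, colours $0$ and $2$ then give edges between the orbits, and only colour $1$ gives a semi-edge.
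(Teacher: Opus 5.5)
Your verification that $(\M,\vartheta)$ is a proper pair is correct and follows the paper's proof. Your non-orientability step is in fact more careful than the paper's: you base the lift at $(u_0,i)$ with $2i\equiv\vartheta(W)$, whereas basing it at $(u,0)$ only works when $\vartheta(W)\equiv 0 \pmod 4$.

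Your plan for colours $0$ and $2$ also works, though by a different route from the paper. Let $w=\vartheta(W)$ be even and choose $i$ with $2i\equiv w$. An automorphism sending $(u,i)$ to $(u^0,1-i)$ would carry the closed lift of $W$ at $(u,i)$ to the lift at $(u^0,1-i)$ of $W\rho_0$. That walk has the same colour sequence and the same weight $w$. Its lift at $(u^0,1-i)$ is closed if and only if $2-2i\equiv w$, i.e.\ $2w\equiv 2 \pmod 4$, which is impossible for even $w$. The paper argues differently: the two orbits are the even sheets $\{(v,j): j \text{ even}\}$ and the odd sheets $\{(v,j): j \text{ odd}\}$, and $0$- and $2$-edges change the parity of $j$.

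The genuine gap is colour $1$. You leave this step open, and the fix you propose cannot work. The map $(v,j)\mapsto(v\rho_1,c-j)$ is an automorphism of $\M^\vartheta$ only if $2c\equiv 2\vartheta(e)\pmod 4$ for every edge $e$. No $c$ satisfies this, because $\vartheta$ takes both values $0$ and $1$.

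The missing idea is that you do not need a single automorphism swapping every flag with its $1$-neighbour. Take the lift $(v,j)\mapsto(v\rho_1,j)$ given by \Cref{cor:autolift} with $\alpha=\mathrm{id}$; you already established that it exists in condition (4). It sends $(u,0)$ to $(u^1,0)$, which is exactly the $1$-neighbour of $(u,0)$, since $-0=0$. Alternatively, $(v,j)\mapsto(v\rho_1,j+2)$ does the same for flags with $j$ odd.

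Two-orbit propagation then finishes the argument. If $f$ and $f^1$ lie in the same orbit $O$, then $(f\gamma)^1=f^1\gamma\in O$ for every automorphism $\gamma$, so every flag of $O$ has its $1$-neighbour in $O$. Hence no flag of the other orbit can have its $1$-neighbour in $O$. This same propagation remark is also what justifies your final sentence about colours $0$ and $2$.
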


\begin{proof}
We will first show that $(\M,\vartheta)$ is a proper pair. That is, that it satisfies properties (1)--(4) of \Cref{def:properpair}. We get (1) by hypothesis. Clearly, $\vartheta$ is $\Aut(\M)$-consistent since all $i$-edges have the same weight for any given $i \in \{0,1,2\}$, and thus (4) is satisfied. Moreover, by Lemma \ref{lem:oddcycleevenedges}, $\M$ admits a closed walk of odd length and even weight, and thus condition (2) is also satisfied.

We still need to show that condition (3) is satisfied; that is, that $\M^\vartheta$ is a non-orientable maniplex. To see that $\M^\vartheta$ is connected, assume without loss of generality that $p$ is odd. Then, every connected component of $\M_{\{0,1\}}$ is a cycle of alternating colours $0$ and $1$ of length $2p$ and odd weight $p$. It follows that $\M^\vartheta$ is connected by Lemma \ref{lem:evenlift}. Furthermore, $\M^\vartheta$ has the string property by Lemma \ref{lem:extstring}, since all four edges of a cycle $C$ of alternating colour $0$ and $2$ in $\M$ have weight $1$, and thus $\vartheta(C) = 1 -1 +1-1=0$. Therefore, $\M^\vartheta$ is a maniplex. Furthermore, by Lemma \ref{lem:oddcycleevenedges}, $\M$ admits a closed walk of odd length and even weight, and by \Cref{lem:oddlift}, its lift based at $(u,0)$ is a closed walk of odd length in $\M^\vartheta$, which necessarily contains an odd cycle. It follows that $\M^\vartheta$ is non-orientable. Therefore $(\M,\vartheta)$ is a proper pair.

Finally, we need to show $\M^\vartheta$ has symmetry-type $2^3_{\{1\}}$. Let $u$ be a flag of $\M$ and consider the flag $(u,0)$ in the fibre of $u$. Since $\M$ is regular, for $i \in \{0,1,2\}$ there exists $\rho_i \in \Aut(\M)$ mapping $u$ to $u^i$. Observe that all automorphisms of $\M$ are weight preserving, since edges of the same colour have the same weight under $\vartheta$. It follows, that $\rho_i$ has a lift $\tilde{\rho}_i$ mapping $(u,j)$ to $(u^i,j)$ (take $\alpha$ to be the identity in $\Aut(\ZZ_4)$ in \Cref{cor:autolift}). Now, since $1$-edges in $\M$ have trivial weight, we see that $(u,0)$ is $1$-adjacent to $(u^1,0) = (u,0)\tilde{\rho}_1$. This shows that $(u,0)$ is in the same orbit as its $1$-neighbour and since $\M^\vartheta$ has exactly two flag-orbits by \Cref{lem:nicecover}, we see that every flag in $\M^\vartheta$ is in the same orbit as its $1$-neighbour.

Now, let $i \in \{0,2\}$. The $i$-neighbour of $(u,0)$ is $(u^i,1)$, since $i$-edges of $\M$ have weight $1$. Now, $\tilde{\rho_i}$ maps $(u,0)$ to $(u^i,0)$, which belongs to a different orbit than  $(u^i,1)$, by \Cref{lem:nicecover}. Hence $(u,0)$ and its $i$-neighbour belong to different orbits, and since there are only two flag-orbits, the same holds for any flag of $\M$. We have shown that $\M^\vartheta$ has symmetry type $2^3_{\{1\}}$.
\end{proof}

Theorem \ref{theo:main} now follows from Proposition \ref{prop: goodweight} and Lemmas \ref{lem:niceext}, \ref{lem:nicecover} and \ref{lem:stgcover}. Indeed, given a non-orientable regular map $\M$ of type $\{p,q\}$, where $p$ or $q$ is odd, the pair $(\M,\vartheta)$ is a proper pair by \Cref{prop: goodweight}. The cross-cover of $(\M,\vartheta)$ is an unstable $2$-orbit maniplex by \Cref{lem:nicecover}, and it has symmetry-type $2^3_{\{1\}}$ by \Cref{lem:stgcover}, making it fully-transitive.

Furthermore, by \Cref{lem:niceext}, $(\M,\vartheta)$ can be extended by means of the colouring $\hat{\C}$ to a proper pair $(\hat{2}^\M,\vartheta_{\hat{\C}})$ of rank $4$, whose cross-cover  is an unstable $2$-orbit maniplex of symmetry-type $2^3_{\{1\}}$, by \Cref{lem:nicecover} and \Cref{lem:stgcover}. 
In turn, the rank $4$ extension $(\hat{2}^\M,\omega_{\hat{\C}})$ (and each of its subsequent extensions) can be extended to a proper pair in two different ways by using either its total colouring $\hat{\C}$ or its antipodal colouring $\C^\times$. This produces an infinite family of proper pairs, with $2^{n-3}$ elements for each rank $n > 3$. The cross-cover of each such proper pair is an unstable fully-transitive maniplex with two orbits, of symmetry-type is $2^{n}_{\{\overline{0,2}\}}$.

\begin{figure}[H]
    \centering
    \includegraphics[width=0.7\linewidth]{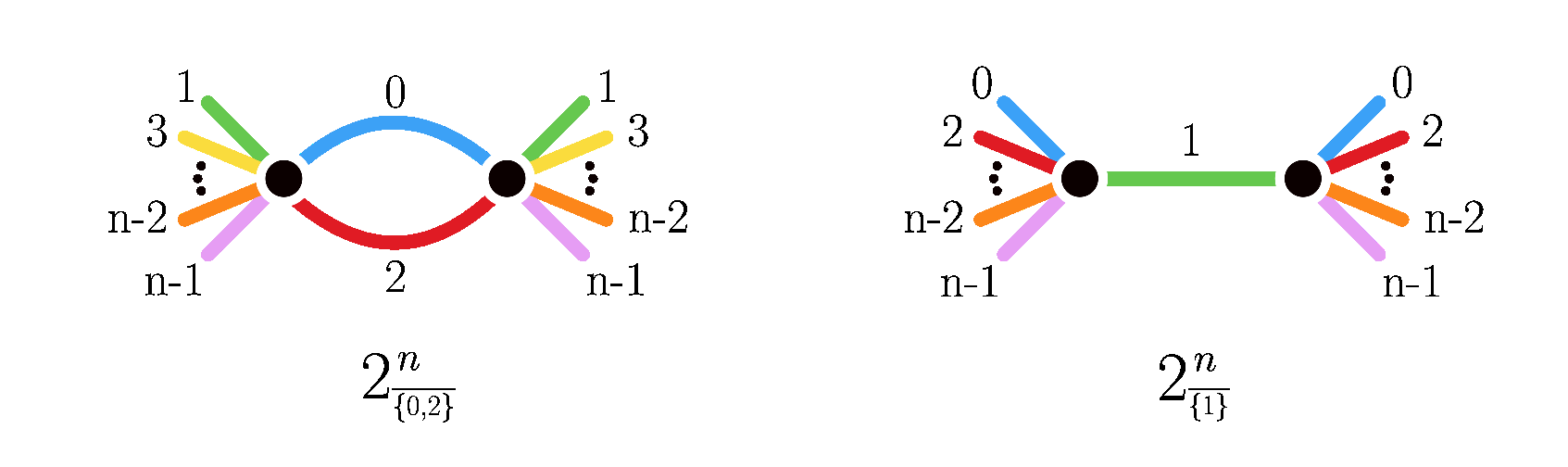}
    \caption{The symmetry-type graphs $2^{n}_{\{\overline{0,2}\}}$ and $2^{n}_{\{\overline{1}\}}$}
    \label{fig:stgs}
\end{figure}

A variation of this construction yields a similar infinite family of $2$-orbit unstable maniplexes, but with symmetry type $2^{n}_{\{\overline{1}\}}$. This can be done by considering the $4$-weight function $\vartheta'$ that gives a weight of $1$ to all $1$-edges, and null weight to all other edges (in a sense, a `reversed' $\vartheta$). An analog of Proposition \ref{prop: goodweight} holds for $\vartheta'$, with the difference that the resulting proper pair $(\M,\vartheta')$ has a cross-cover of symmetry type $2^3_{\{0,2\}}$, instead of $2^3_{\{1\}}$. Thus, a different infinite family of $2$-orbit (although not fully-transitive) unstable maniplexes with symmetry type $2^3_{\overline{\{1\}}}$ can be produced.


%
%
%
%
%
%
%
%
%
%
%

\bibliographystyle{acm} 
\bibliography{inestabilidad}

\end{document}